\documentclass[pdflatex,sn-mathphys-num]{sn-jnl}

\usepackage{graphicx}%
\usepackage{multirow}%
\usepackage{amsmath,amssymb,amsfonts}%
\usepackage{amsthm}%
\usepackage{mathrsfs,natbib}%
\usepackage[title]{appendix}%
\usepackage{xcolor}%
\usepackage{textcomp}%
\usepackage{manyfoot}%
\usepackage{booktabs}%
\usepackage{algorithm}%
\usepackage{algorithmicx}%
\usepackage{algpseudocode}%
\usepackage{listings}%
\usepackage{subcaption}
\usepackage{setspace}

\theoremstyle{thmstyleone}%
\newtheorem{theorem}{Theorem}
\newtheorem{proposition}[theorem]{Proposition}%
\newtheorem{lemma}{Lemma}
\theoremstyle{thmstyletwo}%
\newtheorem{example}{Example}%
\newtheorem{remark}{Remark}%

\theoremstyle{thmstylethree}%
\newcommand{\be}{\begin{equation}}
\newcommand{\ee}{\end{equation}}
\newcommand{\bes}{\begin{equation*}}
\newcommand{\ees}{\end{equation*}}
\newcommand{\bal}{\begin{aligned}}
\newcommand{\eal}{\end{aligned}}
\newcommand{\ba}{\begin{eqnarray}}
\newcommand{\ea}{\end{eqnarray}}
\newcommand{\bas}{\begin{eqnarray*}}
\newcommand{\eas}{\end{eqnarray*}}

\def\D{{\mathcal D}}
\def\P{{\mathcal P}}
\def\N{{\mathcal N}}

\def\E{{\mathcal E}}
\def\F{{\mathcal F}}

\def\W{{\mathcal W}}

\def\Re{\mathbb{R}}

\def\bfb{{\bf b}}

\def\bfx{{\bf x}}
\def\bfy{{\bf y}}
\def\bfz{{\bf z}}
\def\bfw{{\bf w}}

\def\bfbeta{{\bf {\boldsymbol \beta}}}

\def\bfmu{{\bf {\boldsymbol \mu}}}
\def\bfone{{\bf 1}}

\def\diag{\mbox{diag}}

\def\ef{{ef}}
\def\efl{{efl}}

\def\mvp{{mvp}}
\def\r{{r}}

\def\rmv{{rmv}}
\def\rmvl{{rmvl}}
\def\sharpe{{sharpe}}
\def\target{{target}}

\def\hatbeta{\widehat{\bfbeta}}
\def\hatC{\widehat{C}}

\begin{document}

\title[Portfolio Selection by Joint Estimation-Robustness Optimization with the Return and Risk Constraints]{Portfolio Selection by Joint Estimation-Robustness Optimization with the Return and Risk Constraints}


\author[1]{\fnm{Yingxiao} \sur{Wang}}\email{yingxiaowang@pku.edu.cn}

\author[2]{\fnm{Lingchen} \sur{Kong}}\email{lchkong@bjtu.edu.cn}

\author[3]{\fnm{Selin D.} \sur{Ahipasaoglu}}\email{sda1u20@soton.ac.uk}

\author*[4]{\fnm{Houduo} \sur{Qi}}\email{houduo.qi@polyu.edu.hk}

\affil[1]{\orgdiv{Academy for Advanced Interdisciplinary Studies}, \orgname{Peking University}, \orgaddress{\city{Beijing}, \postcode{100871}, \country{China}}}

\affil[2]{\orgdiv{School of Mathematics and Statistics}, \orgname{Beijing Jiaotong University}, \orgaddress{\city{Beijing}, \postcode{100044}, \country{China}}}

\affil[3]{\orgdiv{School of Mathematical Sciences}, \orgname{University of Southampton}, \orgaddress{\city{Southampton}, \postcode{SO17 1BJ}, \country{United Kingdom}}}

\affil[4]{\orgdiv{Department of Data Science and Artificial Intelligence}, \orgname{The Hong Kong Polytechnic University}, \orgaddress{\city{Hongkong}, \postcode{999077}, \country{China}}}


\abstract{In answering the main criticism of the classical robust optimization that the size of the uncertainty set has to be determined beforehand, a recently proposed Joint Estimation and Robustness Optimization framework treats the size of the uncertainty set as a variable and maximizes it subject to some robust conditions over the uncertainty set. This paper studies its effect on portfolio selection when both the robust return and risk constraints are built into Joint Estimation and Robustness Optimization framework. Although this framework does not directly pursuit any optimality of portfolio return or risk, we report a surprising result that Joint Estimation and Robustness Optimization framework actually selects efficient portfolios. This is proved by reformulating the robust risk constraint as a tractable form through the S-procedure. Moreover, a closed-form formula was derived when there is only the budget constraint. An efficient algorithm is proposed when more constraints are added. Numerical experiments illustrate the validity of the reported results. This research paves the way for Joint Estimation and Robustness Optimization framework to be applied to the portfolio selection problem with return and risk related robust constraints.}

\keywords{Robust optimization, joint robustness and estimation, portfolio selection, efficient frontier, S-procedure}



\maketitle

\section{Introduction}\label{sec1}

This paper is primarily concerned with the two issues that have to be resolved for constructing a personalized portfolio by an investor. The first issue is about handling the uncertainty among the historical financial data.
The uncertainty is often related to the sensitivity of the constructed portfolio on some estimated quantities such as the expected return and the covariance matrix, see \cite{ledoit2004a, ledoit2017nonlinear} and the references therein. Robust optimization is often used to take the uncertainty \citep{ben2009robust}. The second issue is about setting up the goals that an investor would wish to achieve. Those may include a minimal level of return of his portfolio, maximal level of the risk he may face, a reasonable Sharpe ratio, and/or a certain level of diversification. Those goals each has become a well studied topic in portfolio optimization and achieving some of those goals altogether often leads to a multi-objective optimization problem. In particular, the balancing between the return and the risk leads to Markowitz's seminal work on the Mean-Variance (MV) model \citep{markowitz1952}. This paper follows the robust optimization framework recently introduced by \cite{Zhu2022joint}. We investigate the effect of the mean and variance of a portfolio under this framework and establish the basic results related to the efficient frontier of the MV model. 

\subsection{Robust optimization}

Since the publication of the book \cite{ben2009robust}, Robust Optimization (RO) has become a well-established
framework to achieve the best decisions under uncertainty.
Three elements are essential in building an effective RO formulation: the uncertainty set, the objective function, and the effective algorithms. 
Defining the uncertainty set encompasses all the possible values of the quantities of interest (e.g., the expected return) with a certain degree of deviation from the true values under a model of uncertainty. 
There are two types of uncertainty sets that are often studied: ellipsoidal and box-constrained.
In the field of robust portfolio optimization, there 
exists extensive research to address the first two elements, see e.g., \citep{Ghaoui1997robust, Ghaoui1998robust, ben1998robust, ben1999robust, goldfarb2003robust, tutuncu2004robust, fabozzi2007robust, Ling2020robust,Chakrabarti2024parameter}.
The third element is about developing effective algorithms for the robust model, obtained by specifying the first two. 
In this regard, the second-order cone programming (SOCP) and semidefinite programming (SDP) are
often used. There exists a large number of examples in the book \cite{ben2009robust} that
reformulate RO into tractable formulations through SOCP and/or SDP. Similar techniques using SOCP relaxations can be found in \cite{xu2023} and \cite{luo2025}.
The techniques used for reformulation are diverse and often depend on the applications at hand.
The bibliographic overviews of this area for financial portfolios can be found in \cite{Xidonas2020robust} and \cite{Salo2024fifty}.

One criticism of RO is that 
the size of the uncertainty set has to be decided beforehand in order for RO to be well-defined.
How well robust optimization techniques perform relies heavily on the definition and size of the uncertainty set. 
Indeed, as highlighted by \cite{Bertsimas2018data}, a poorly defined or sized uncertainty set often leads to an overly conservative or computationally intractable portfolio model.

In answering to this criticism, \cite{Zhu2022joint} proposed a new framework of RO.
The proposal is based on the new interpretation that
 the size of uncertainty set of an estimate is intrinsically  related to
the confidence interval of the estimate.
In other words, the uncertainty set contains
all the values of the estimate that cannot be rejected given a confidence level.
Ideally, one would explore all those possible values. This leads to maximizing the size of
the uncertainty set. 
Since an arbitrary value from this uncertainty set is as good as any other, if one value satisfies a condition, all other values from this uncertainty set must satisfy
the same condition. This aspect of the requirement gives rise to the robustness and results in the framework of
JERO (Joint Estimation and Robustness Optimization).
JERO has found a number of important applications and portfolio selection is one of them.


\subsection{JERO framework}

We briefly describe this framework using a minimal set of mathematical notation.
For extensive coverage of this framework in a more general modeling setting, please refer to \cite{Zhu2022joint}.
Suppose we have observed data set $\D = \left\{ \bfx_i\right\}_{i=1}^N$ with each $\bfx_i \in \Re^n$.
Let $\bfbeta \in \Re^m$ be the parameter of interest to be estimated  (e.g., being the expected return vector of $n$ assets). Let $\rho(\bfbeta; \D)$ 
be a metric that is used for estimation and for 
defining the uncertainty set with the size $r\ge 0$ and a known $\widehat{\rho} \ge 0$:
\begin{equation}\label{Uncertainty_Set}
    \E (r; \D) := \left\{  \bfbeta\ | \  \rho(\bfbeta; \D) \le r + \widehat{\rho} \right\} .
\end{equation}
Here ``$:=$'' means ``define''.
It is often the case that there is known $\widehat{\bfbeta}$ (e.g., the maximum likelihood estimate)
such that $\widehat{\rho} = \rho (\widehat{\bfbeta}; \D )$.
In this way, the uncertainty set $\E(r; \D)$ is not empty. 
A key component in JERO is its robust constraints.
They are constructed as follows. 
Suppose there exists a function $a(\bfx, \bfbeta)$ of variables $\bfx$ and $\bfbeta$ such that the constraint $a(\bfx, \bfbeta) \le \tau$ is satisfied for given $\tau$.
The robust version is for this constraint to be satisfied for all $\bfbeta$ in the uncertainty set $\E(r; \D)$:
\[
  \max\{  a(\bfx, \bfbeta ) \ | \  \bfbeta \in \E(r; \D)\} \le \tau .
\]

The JERO in its general form is given by 
\be \label{JERO} 
\begin{array}{ll}
	\max\limits_{\bfx,\; r} & r \\
	\mbox{s.t.} & \max\{  a_i(\bfx, \bfbeta ) \ |  \ \bfbeta \in \E(r; \D)\} \le \tau_i ,  \ i = 1, \cdots, k,\\
	&  \bfx \in \P , 
\end{array} 
\ee 
where we have $k$ robust constraints and $\P$ is a closed convex set for $\bfx$. 
This is the main modeling contribution from \cite{Zhu2022joint}.

We note that JERO is in the format of bi-level optimization and is in general computationally intractable \cite{dempe2020bilevel}.
Under the assumption that each $a_i(\bfx, \bfbeta)$ is a saddle function (i.e., $a_i(\bfx, \bfbeta)$ is convex in $\bfx$ and concave in $\bfbeta$),
the model (\ref{JERO}) is often tractable (e.g., solved by convex programming) via the convex duality (Fenchel duality).
A number of examples were given in \cite{Zhu2022joint} where (\ref{JERO}) can be solved through a sequence of second-order cone programs. 
This represents the algorithmic contribution from \cite{Zhu2022joint}.
In particular, Example 2 in \cite{Zhu2022joint} demonstrates its application to
robust portfolio selection when $k=1$ and $a(\bfx, \bfbeta)$ is set to  be the negative (expected) return of a portfolio: $a(\bfw, \bfbeta) = -\bfbeta^\top \bfw$. Here we use $\bfw$ instead of $\bfx$ to denote the portfolio weights. 
In this way, the robust constraint
in (\ref{JERO}) means that the desired portfolio $\bfw$ achieves the return target $\tau$ for any $\bfbeta$ from the largest uncertainty set.
The robust portfolio is tractable because  $a(\bfw, \bfbeta)$ is linear in $\bfbeta$ (hence concave as well). 
It is further proved that the selected robust portfolio corresponds to the maximum Sharpe ratio portfolio.

As evidenced in the Markowitz MV model, return and risk are two of the most valuable measurements for a portfolio.
Therefore, one important question regarding JERO is to understand what the robust portfolio would achieve when both the robust return and robust risk constraints are built
into the JERO model ($k=2$).
It turns out that it is not an easy question to answer
because the risk of a portfolio, as we will see, is not concave in $\bfbeta$.
This means that the duality approach in 
\cite{Zhu2022joint} cannot be extended to address the robust risk constraint.
A new approach must be found. 
We summarize our main contributions below.

\subsection{Main contributions}

It has become clear that the main model we are going to consider is JERO with both robust return and risk constraints ($k=2$ in (\ref{JERO})).
We call it JERO-MV model and report the following findings.

\begin{itemize}
	\item[(i)] {\bf Robust risk constraint has a convex representation}.
	In contrast to the Fenchel duality approach in \cite{Zhu2022joint}, 
	the celebrated S-Lemma/S-Procedure in SDP (see \citep[Appendix B]{ben2009robust} and \citep[Appendix B]{boyd2004convex}) 
	is used to derive the convex representation.
	Together with the convex representation of the robust return constraint derived in \cite{Zhu2022joint}, this result
	leads to a tractable reformulation of the JERO-MV model.

	\item[(ii)] {\bf JERO-MV portfolio is efficient}.
	This is surprising because JERO-MV model does not directly pursuit any optimality of return or risk (e.g., maximum return or minimum risk),
	but instead it considers all portfolios that meet some given targets (i.e, portfolio return is not less than $\mu_\target$ and portfolio risk
	is not bigger than $\sigma^2_\target$.) 
	Hence, JERO-MV model appears more conservative than the Markowitz MV model and one would expect it choose a portfolio that is 
	less efficient.
	Our result shows that the JERO-MV acts like a utility function that chooses an efficient portfolio.
	Moreover, the location of the efficient portfolio can be precisely characterized. For example,
	for the case when $\mu_\target$ is set bigger than the minimum variance portfolio return $\mu_\mvp$,
	the efficient portfolio can be computed through a quadratic formula in Theorem \ref{Thm-EF-A} and is illustrated in Fig.~\ref{Fig-largeTarget}. 
	For the case $\mu_\target < \mu_\mvp$ (say, in the scenario all portfolios should have return above that of a risk-free asset),
	we also have similar characterizations, see Theorem \ref{Thm-EF-B} and its illustration in Fig.~\ref{Fig-smallTarget}. 
	Those results are obtained when there is only one extra constraint: the budget constraint.

	\item[(iii)] {\bf JERO-MV model can be efficiently solved}.
	When more constraints are added, there is certainly a feasibility issue whether there exists a portfolio that meets both the robust return and
	risk constraints. To alleviate this issue, we optimize the violation of robust return and employ the bisection method proposed in \cite{Zhu2022joint}.
	This way, we can solve a sequence of feasible problems. Depending on the final objective value, we can decide whether the original JERO-MV is feasible or not.
	If it is feasible, our method returns the robust portfolio. Otherwise, we need to re-set the targets $\mu_\target$ and $\sigma_\target$ so that the new JERO-MV is feasible. Numerical results are used to demonstrate the efficiency of the proposed algorithm.

\end{itemize}

The paper is organized as follows.
In the next section, we describe some mathematical notation used and list some key results about the efficient frontier for
the Markowitz MV model.
Section~\ref{Section-JERO-MV} introduces the JERO-MV model and reformulates the robust risk constraint in a tractable form, see Proposition~\ref{Thm-variance}.
In Section~\ref{Section-EF}, we prove that the optimal JERO-MV portfolio is efficient and its location on the efficient frontier is characterized in Theorem~\ref{Thm-EF-A} and Theorem~\ref{Thm-EF-B}
when there is only the budget constraint.
We also propose a new bisection algorithm when more constraints are included. 
Numerical experiments are reported in Section~\ref{Section-NE}. 
We conclude the paper in Section~\ref{Section-Conclusion}.
Some technical results are moved to the appendices.

\section{Background on Portfolio Selection} \label{Section-Background}

We first set up the notation used in this paper. The boldfaced small letter such as $\bfw \in \Re^n$ denotes a column vector
of length $n$. Its transpose is denoted by $\bfw^\top$. Therefore, $\bfx^\top \bfy$ means the standard inner product between
two vectors of the same length. Let $\mathbb{S}^n_+$ (resp. $\mathbb{S}^n_{++}$) denote the set of all positive semidefinite (resp. definite)
$n \times n$ symmetric matrices. We often write $A \succeq 0$ for $A \in \mathbb{S}^n_+$ and $A \succeq B$ means $(A-B) \in \mathbb{S}^n_+$.
For a matrix $A \in \mathbb{S}^n_{++}$, the $A$-weighted norm of $\bfx \in \Re^n$ is defined by
$
 \| \bfx\|_A := \sqrt{\bfx^\top A \bfx} .
$
For $A \in \mathbb{S}^n_+$, $A^{1/2} \in \mathbb{S}^n_+$ is the square root of $A$.


We now introduce the portfolio selection problem.
Suppose there are $n$ securities with their returns at time $t$ denoted by
$
\bfx_t^\top = [x_{1t}, \cdots, x_{nt}].
$
A portfolio consisting of those $n$ assets is denoted by a vector $\bfw \in \Re^n$ with each weight $w_i$ representing 
the amount percentage of the capital to be invested in each asset.
Therefore, we have $\bfone_n^\top \bfw = 1$ (known as the budget constraint), where $\bfone_n$ is the column vector of all ones of size $n$.  
A portfolio is constructed through historical data $\D = \{ \bfx_t\}_{t=1}^N$, where it is assumed that the data points are independent and identically distributed (i.i.d.). 
Let $\bfbeta \in \Re^n$ be the expected return vector of $\bfx_t$ (i.e., $\bfbeta = E[\bfx_t]$) and
$C \in \mathbb{S}^n_+$ be its covariance matrix. 
We assume that $\bfx_t$ follows a multivariate normal distribution $\N(\bfbeta, C)$.
Let $\hatbeta$ and $\hatC$ be respectively the maximum likelihood estimates of $\bfbeta$ and $C$ from the data set $\D$ \citep{mardia1979multivariate}:
\[
\hatbeta = \frac 1N \sum_{t=1}^N \bfx_t \quad \mbox{and} \quad
\hatC = \frac 1N \sum_{t=1}^N (\bfx_t - \hatbeta) ( \bfx_t - \hatbeta)^\top .
\]
We assume $\hatC$ is positive definite. 
For a portfolio $\bfw$, its variance is denoted by $\sigma_\bfw^2 = \bfw^\top \hatC \bfw$. 
Throughout the paper, the two names of variance and risk of a portfolio are used interchangeably.  
%

In \cite{markowitz1952}, authors proposed the concept of efficient portfolio, i.e., the portfolio with minimum variance for given return or less and maximum return for given variance or less. 
The set of all efficient portfolios is called the efficient frontier.
We briefly review some quantitative results about the efficient frontier for the Markowitz mean-variance model defined by $\hatbeta$ and $\hatC$:
\be \label{Markowitz-MV}
  \min \ \bfw^\top \hatC \bfw , \qquad \mbox{s.t.} \quad \hatbeta^\top \bfw = \mu, \ \ \bfone_n^\top \bfw = 1,
\ee
where $\mu$ is a given return level.
In particular, the minimum variance portfolio $\bfw_\mvp$ plays an important role in our analysis:
\[
  \bfw_\mvp := \arg\min_{\bfw} \left\{
    \bfw^\top \hatC \bfw \ \Big\vert \ \bfone_n^\top \bfw = 1
  \right\}
  = \frac{ \hatC^{-1} \bfone_n }{ \bfone_n^\top \hatC^{-1} \bfone_n } .
\]
Its expected return is $\mu_\mvp = \hatbeta^\top \bfw_\mvp$. 
It is known that the optimal solutions of (\ref{Markowitz-MV}) for all $\mu \ge \mu_\mvp$ form the efficient frontier of all feasible 
portfolios. 
In order to describe the efficient frontier, we 
define the so-called self-financed portfolio $\bfw_0$ (due to $\bfone_n^\top \bfw_0=0$) \citep{korkie2002mean}:
\[
  \bfw_0 := \frac{\hatC^{-1} (\hatbeta - \mu_\mvp \bfone_n ) }{ \| \hatbeta - \mu_\mvp \bfone_n \|_{\hatC^{-1} }}, 
  \quad \mbox{with} \ \
  \mu_\mvp = \frac{\bfone_m^\top \hatC^{-1} \hatbeta}{\bfone_n^\top \hatC^{-1} \bfone_n} . 
\]
The result below says that the efficient frontier can be obtained through combinations of $\bfw_\mvp$ and $\bfw_0$.

\begin{proposition} \label{Prop-EF}
For a given $\gamma \ge 0$, define the portfolio $\bfw_\gamma$ by
\[
  \bfw_\gamma := \bfw_\mvp + \gamma \bfw_0 .
\]	
Its return and variances are defined by $\mu_\gamma := \hatbeta^\top \bfw_\gamma$ and $\sigma_\gamma^2 := \bfw_\gamma^\top \hatC \bfw_\gamma$.
The following hold.
	
\begin{itemize}
	\item[(i)] The efficient frontier of (\ref{Markowitz-MV}) for $\mu \ge \mu_\mvp$ is given by \cite[Definition~4.2.1]{brugiere2020quantitative}:
	\[
	   \F^+ := \left\{ \bfw_\gamma \ | \  \gamma \ge 0 \right\}.
	\]
	
	\item[(ii)] Let $\mu_0 := \hatbeta^\top \bfw_0$. It holds \cite[Proposition~4.1, Corollary~4.2.1]{brugiere2020quantitative}:
	\bes
      \bal
          &\sigma_\gamma^2 = \sigma_\mvp^2 + \gamma^2 =
	  \sigma_\mvp^2 + \left(
	  \frac{ \mu_\gamma - \mu_\mvp }{ \mu_0 }
	 \right)^2 \quad \mbox{and} \quad \mu_0 = \| \hatbeta - \mu_\mvp \bfone_n \|_{\hatC^{-1}} .
      \eal
	\ees
	
\end{itemize}	
	
\end{proposition}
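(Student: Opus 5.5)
The plan is to solve the equality-constrained quadratic program (\ref{Markowitz-MV}) directly through its Lagrangian and then rewrite the solution in terms of $\bfw_\mvp$ and $\bfw_0$. Introduce the standard scalars $a := \bfone_n^\top \hatC^{-1}\bfone_n$, $b := \bfone_n^\top \hatC^{-1}\hatbeta$ and $c := \hatbeta^\top \hatC^{-1}\hatbeta$, so that $\mu_\mvp = b/a$. Since $\hatC \succ 0$, (\ref{Markowitz-MV}) is strictly convex. Assuming $\hatbeta$ is not a multiple of $\bfone_n$, its two constraints are linearly independent, so the KKT conditions characterize the unique minimizer. Stationarity gives $\bfw = \hatC^{-1}(\lambda_1 \hatbeta + \lambda_2 \bfone_n)$, hence every optimal portfolio lies in $\mathrm{span}\{\hatC^{-1}\bfone_n, \hatC^{-1}\hatbeta\}$. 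This span equals $\mathrm{span}\{\bfw_\mvp, \bfw_0\}$, because $\hatC^{-1}\hatbeta = \|\hatbeta-\mu_\mvp\bfone_n\|_{\hatC^{-1}}\bfw_0 + \mu_\mvp a\, \bfw_\mvp$.

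Next I would record the elementary identities:
\begin{itemize}
\item $\bfone_n^\top \bfw_\mvp = 1$ and $\bfone_n^\top \bfw_0 = (b - \mu_\mvp a)/\|\cdot\| = 0$;
\item $\bfw_\mvp^\top \hatC \bfw_0 = \bfone_n^\top \hatC^{-1}(\hatbeta-\mu_\mvp\bfone_n)/(a\|\cdot\|) = 0$;
\item $\bfw_0^\top\hatC\bfw_0 = 1$;
\item $\sigma_\mvp^2 = 1/a$;
\item $\mu_0 = \hatbeta^\top\bfw_0 = (\hatbeta-\mu_\mvp\bfone_n)^\top \hatC^{-1}(\hatbeta-\mu_\mvp\bfone_n)/\|\cdot\| = \|\hatbeta-\mu_\mvp\bfone_n\|_{\hatC^{-1}}$.
\end{itemize}
In the last identity, replacing $\hatbeta$ by $\hatbeta - \mu_\mvp\bfone_n$ in the first factor is allowed because $\bfone_n^\top\hatC^{-1}(\hatbeta-\mu_\mvp\bfone_n)=0$. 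This proves the second claim in (ii). By the $\hatC$-orthogonality and unit norm, $\sigma_\gamma^2 = \sigma_\mvp^2 + \gamma^2$. Moreover $\mu_\gamma = \mu_\mvp + \gamma\mu_0$, so $\gamma = (\mu_\gamma-\mu_\mvp)/\mu_0$, which gives the remaining formula in (ii).

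For (i), fix $\mu \ge \mu_\mvp$. Since the minimizer lies in the span, write it as $\bfw = \alpha\bfw_\mvp + \gamma\bfw_0$. The budget constraint forces $\alpha=1$, and the return constraint forces $\gamma = (\mu-\mu_\mvp)/\mu_0 \ge 0$. Hence the minimizer is exactly $\bfw_\gamma$.

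A cleaner way to show optimality, without solving for the multipliers, is as follows. Any feasible $\bfw$ can be written as $\bfw_\gamma + \bfd$ with $\bfone_n^\top\bfd = 0$ and $\hatbeta^\top\bfd=0$. Then $\bfd^\top\hatC\bfw_\gamma = \bfd^\top(\hatC^{-1}\bfone_n/a + \gamma\,\hatC^{-1}(\hatbeta-\mu_\mvp\bfone_n)/\mu_0)^{\!\top}$-type terms, which vanish because $\hatC\bfw_\gamma$ is a combination of $\bfone_n$ and $\hatbeta$. So $\bfw^\top\hatC\bfw = \sigma_\gamma^2 + \bfd^\top\hatC\bfd \ge \sigma_\gamma^2$, and $\bfw_\gamma$ is optimal.

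Conversely, $\gamma \mapsto \mu_\gamma$ is a bijection from $[0,\infty)$ onto $[\mu_\mvp,\infty)$, so $\F^+$ is exactly the set of optimal solutions. These portfolios are efficient in Markowitz's sense because $\sigma^2_\gamma$ is increasing in $\mu_\gamma$ on this branch. Portfolios with $\mu<\mu_\mvp$ are dominated by their mirror-image counterparts, which have the same variance and higher return.

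The only delicate point is the degenerate case $\hatbeta \parallel \bfone_n$, where $\mu_0 = 0$ and $\bfw_0$ is undefined; I will assume it away as the paper implicitly does. Otherwise the argument is routine linear algebra, and since the paper cites \cite{brugiere2020quantitative}, one may also simply defer to it.
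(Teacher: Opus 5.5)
The paper gives no proof of its own for this proposition; both parts are quoted from the textbook \cite{brugiere2020quantitative} (Definition~4.2.1, Proposition~4.1, Corollary~4.2.1). Your proposal replaces that citation with a self-contained argument, and the argument is correct. The identities $\bfone_n^\top\bfw_0=0$, $\bfw_\mvp^\top\hatC\bfw_0=0$, $\bfw_0^\top\hatC\bfw_0=1$ and $\mu_0=\|\hatbeta-\mu_\mvp\bfone_n\|_{\hatC^{-1}}$ give (ii) immediately. The decomposition $\bfw=\bfw_\gamma+\mathbf{d}$ with $\bfone_n^\top\mathbf{d}=\hatbeta^\top\mathbf{d}=0$ gives $\bfw^\top\hatC\bfw=\sigma_\gamma^2+\mathbf{d}^\top\hatC\mathbf{d}$. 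That yields existence, optimality and uniqueness of $\bfw_\gamma$ in one stroke, so your KKT paragraph is actually unnecessary. The displayed formula you wrote for $\mathbf{d}^\top\hatC\bfw_\gamma$ is garbled: the $\hatC^{-1}$'s cancel against $\hatC$, there is no transpose, and in fact $\hatC\bfw_\gamma=\bfone_n/a+\gamma(\hatbeta-\mu_\mvp\bfone_n)/\mu_0$. Still, the reason you state, that $\hatC\bfw_\gamma$ lies in the span of $\bfone_n$ and $\hatbeta$, is the right one. Compared with the bare citation, which buys brevity, your route buys two things. First, it makes explicit the standing hypothesis that $\hatbeta$ is not a multiple of $\bfone_n$, equivalently $\mu_0>0$; the paper needs this tacitly just to define $\bfw_0$ and to write $(\mu_\gamma-\mu_\mvp)/\mu_0$ in (ii). Second, your closing dominance argument, combined with the optimality of $\bfw_\gamma$, shows two further facts. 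Here the variance is strictly increasing in return along $\F^+$, and for $\gamma<0$ the mirror portfolio $\bfw_\mvp-\gamma\bfw_0$ dominates $\bfw_\mvp+\gamma\bfw_0$. The facts are that $\F^+$ is exactly the set of Markowitz-efficient budget-feasible portfolios, and that each $\bfw_\gamma$ has the largest return among portfolios of the same variance. These are precisely what the paper later relies on in the proof of Proposition~\ref{Prop-r-mv}(ii) and in Appendix~D, where the JERO-MV optimum is written in the form $\bfw_\mvp+\gamma\bfw_0$.
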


\section{JERO-MV Model} \label{Section-JERO-MV} 

We describe the main model in this section. We begin with the robust return constraint studied in \cite{Zhu2022joint}.

\subsection{Robust return constraint}

 The estimation metric is defined by
\begin{equation*}
	\begin{aligned}
    \rho(\boldsymbol{\beta};\D)=  \min\limits_{C \in \mathbb{S}_+^n}\frac{1}{2}\left( \ln\det C  + \frac{1}{N}\sum\limits_{t=1}^N \| {\bfx}_t-\boldsymbol{\beta} \| ^2_{C^{-1}}  \right).
	\end{aligned}
\end{equation*}
The optimal solution, denote by $C_{\bfbeta}$ is given by \cite[Proof of Proposition 6]{Zhu2022joint}
\be \label{Cbeta}
  C_{\bfbeta} = \hatC + (\bfbeta - \hatbeta) (\bfbeta - \hatbeta)^\top
\ee
and $\rho(\bfbeta; \D)$ can be computed by
\begin{equation}\label{eq2.5}
	\begin{aligned}
		\rho(\boldsymbol{\beta};\mathcal{D}) = \ \frac{1}{2}\left( n+\ln\det \hatC  + \ln( 1+ \| \boldsymbol{\beta}-\hatbeta \| ^2_{\hatC^{-1}}) \right).
	\end{aligned}
\end{equation}
Thus, the estimation uncertainty set (\ref{Uncertainty_Set}) with $\widehat{\rho} = \rho(\hatbeta; \D)$ can be written as
\begin{equation}
	\begin{aligned}
		\mathcal{E}(r;\mathcal{D})=\left\{\boldsymbol{\beta}\in \Re^n \ \big\vert \ \Vert \boldsymbol{\beta}-\hatbeta \Vert_{\hatC^{-1}}^2 \leq e^{2r}-1 \right\}.
	\end{aligned}
\end{equation}
It is closed and convex. 

Let $\mu_\target$ be a given return target,  $r\ge 0$ and $\bfw$ be a given portfolio. 
The robust return constraint requires the returns of this portfolio computed from any return vector $\bfbeta$ in the uncertainty set to be not less 
than the return target:
\be \label{Robust-Return-Constraint}
 \min\{  \bfbeta^\top \bfw \ | \ \bfbeta   \in \E(r; \D)  \} \ge \mu_\target.
\ee 
This robust constraint has a tractable convex representation via the Fenchel duality theory:
\be \label{SOCP-Mean}
- \hatbeta^\top \bfw + \sqrt{e^{2r}-1} \| \hatC^{1/2} \bfw \|_2 + \mu_\target \le 0 . 
\ee 
In other words, the robust return constraint can be reformulated as a second-order cone constraint when $r$ is fixed.

The JERO with the robust return constraint \cite[Example 2]{Zhu2022joint} is defined by
\be \label{JERO-M}
  \mbox{(JERO-M)} \quad  \left\{  \begin{array}{ll}
   	 \max\limits_{\bfw, \; r} & r \\
   	 \mbox{s.t.} & 
   	               \mbox{Constraint (\ref{SOCP-Mean})} \\
   	             & \bfone_n^\top \bfw = 1, \ \bfw\ge 0, \ r \ge 0 .
   \end{array} 
   \right .
\ee 
With the variance of the portfolio $\bfw$ given by $\sigma_\bfw^2 := \bfw^\top \hatC \bfw$, it is easy to see that the constraint (\ref{SOCP-Mean}) is equivalent to
\[
 \sqrt{e^{2r}-1} \le S(\bfw) := \frac{ \hatbeta^\top \bfw - \mu_{target} }{ \sigma_\bfw }.  \ (\mbox{Sharpe ratio})
\]
Hence, the optimal portfolio by (\ref{JERO-M}) is the maximum Sharpe-ratio portfolio.
It is important to note that the constraint $\bfw \ge 0$ (i.e., the long-only constraint) is included in
(\ref{JERO-M}). It makes the feasible portfolios bounded and hence an optimal solution  must exist.
Without this long-only constraint, the optimization problem (\ref{JERO-M}) may not have an optimal solution.
For example, when $\mu_\target > \mu_\mvp$, (\ref{JERO-M}) would not have an optimal solution if the long-only constraint is removed.
\subsection{Robust risk constraint}

In the theory of mean-variance model of \cite{markowitz1952}, the variance of a portfolio is one of the most
important factors in constructing efficient portfolios. 
Given $\bfbeta$, $C_{\bfbeta}$ in (\ref{Cbeta}) is the maximum likelihood estimate for the covariance matrix $C$.
The associated risk, denoted by $a_\sigma(\bfw, \bfbeta)$, is then given by
\bes
 \bal
     a_\sigma(\bfw, \bfbeta) := \ & \bfw^\top C_{\bfbeta} \bfw 
  = \ \bfw^\top \hatC \bfw + (\bfbeta - \hatbeta)^\top \bfw \bfw^\top (\bfbeta - \hatbeta).
 \eal
\ees
This risk can also be interpreted as the empirical risk proposed by Brodie et. al. \cite[Equation~(1)]{Brodie2009Sparse}:
\[
  \sigma_e^2 (\bfw, \bfbeta) := \frac 1N \| (\bfw^\top \bfbeta) \bfone_N - X \bfw\|^2, 
\]
where $X^\top := [\bfx_1, \ldots, \bfx_N]$ is the sampled data matrix. 
Using the definitions of $\hatbeta$ and $\hatC$, one can prove that $ \sigma_e^2 (\bfw, \bfbeta) = a_\sigma(\bfw, \bfbeta)$. 

Suppose we are given a risk level $\sigma_\target^2$ and a portfolio $\bfw$. We wish that the risk of this portfolio based on the covariance matrix $C_{\bfbeta}$
is no greater than the target for all $\bfbeta$ from the uncertainty set:
\be \label{Robust-variance-constraint}
  \max  \left\{  a_\sigma (\bfw, \bfbeta) \ | \  \bfbeta \in \E(r, \D) \right\} \le \sigma_\target^2  .
\ee
It is easy to see that the function $a_\sigma(\bfw, \bfbeta)$ is convex in $\bfw$, and convex in $\bfbeta$. 
It is not concave in $\bfbeta$, breaking the assumption used in \cite{Zhu2022joint}. 
We have the following characterization of (\ref{Robust-variance-constraint}), whose proof is based on the S-procedure and is moved to the appendix.

\begin{proposition} \label{Thm-variance}
The robust constraint (\ref{Robust-variance-constraint}) holds if and only if 
\be \label{convex-variance}
 e^{2r} \Big( \bfw^\top \hatC \bfw \Big) \le \sigma_\target^2.
\ee 
	
\end{proposition}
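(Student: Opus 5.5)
The plan is to compute the inner maximum in (\ref{Robust-variance-constraint}) exactly, and then to note that the S-procedure gives the same answer. Write $\bfz := \bfbeta - \hatbeta$ and $\kappa := e^{2r}-1 \ge 0$. Then
\[
a_\sigma(\bfw,\bfbeta) = \bfw^\top \hatC \bfw + (\bfw^\top \bfz)^2 ,
\]
and the uncertainty set becomes $\{\bfz : \bfz^\top \hatC^{-1}\bfz \le \kappa\}$. Only the term $(\bfw^\top\bfz)^2$ depends on $\bfz$. So (\ref{Robust-variance-constraint}) is equivalent to
\[
\bfw^\top\hatC\bfw + \max\{(\bfw^\top\bfz)^2 : \|\bfz\|_{\hatC^{-1}}^2\le\kappa\} \le \sigma_\target^2 .
\]

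The key step is to evaluate this maximum. The substitution $\bfz = \hatC^{1/2}\bfy$ turns the constraint into $\|\bfy\|_2^2 \le \kappa$ and the objective into $\big((\hatC^{1/2}\bfw)^\top \bfy\big)^2$. By Cauchy--Schwarz,
\[
\big((\hatC^{1/2}\bfw)^\top \bfy\big)^2 \le \kappa\, \bfw^\top\hatC\bfw ,
\]
with equality at $\bfy = \sqrt{\kappa}\,\hatC^{1/2}\bfw/\|\hatC^{1/2}\bfw\|_2$; when $\bfw=0$ the bound holds trivially. The maximum is therefore $\kappa\,\bfw^\top\hatC\bfw$, and the constraint reads
\[
(1+\kappa)\,\bfw^\top\hatC\bfw = e^{2r}\,\bfw^\top\hatC\bfw \le \sigma_\target^2 ,
\]
which is (\ref{convex-variance}). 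Since the maximum is attained and equals the stated value, both directions of the equivalence follow at once.

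The paper announces an S-procedure proof, so I would also present that route for consistency. The condition ``$\bfz^\top\hatC^{-1}\bfz\le\kappa \Rightarrow \sigma_\target^2 - \bfw^\top\hatC\bfw - \bfz^\top\bfw\bfw^\top\bfz \ge 0$'' is, by the S-lemma, equivalent to the existence of $\lambda\ge0$ such that
\[
\begin{pmatrix} \lambda\hatC^{-1} - \bfw\bfw^\top & 0\\ 0 & \sigma_\target^2-\bfw^\top\hatC\bfw-\lambda\kappa\end{pmatrix}\succeq 0 .
\]
Slater's condition holds for $\kappa>0$ via $\bfz=0$. The case $\kappa=0$, i.e.\ $r=0$, is trivial because then the set is $\{\hatbeta\}$. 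The top-left block is PSD iff $\lambda \ge \bfw^\top\hatC\bfw$, by the Schur complement or the rank-one eigenvalue bound. Taking the smallest admissible $\lambda$ recovers the same inequality. The only delicate point, which I expect to be the main obstacle, is verifying this rank-one PSD condition and handling the degenerate cases $\kappa=0$ and $\bfw=0$. The direct Cauchy--Schwarz argument avoids both issues, so I would lead with it.
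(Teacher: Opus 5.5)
Your proof is correct, and your lead argument takes a genuinely different route from the paper. The paper proves the proposition only via the S-procedure. It rescales $\bfy=(\bfbeta-\hatbeta)/\sqrt{e^{2r}-1}$ and writes the robust constraint as an implication between two quadratic inequalities in $\bfy$. It then applies the S-lemma, with Slater point $\bfy=0$ and the case $r=0$ handled separately. Next it extracts $\lambda\ge(e^{2r}-1)\bfw^\top\hatC\bfw$ from the block-diagonal LMI by a congruence with $Q^\top\hatC^{1/2}$, using that the rank-one matrix $\hatC^{1/2}\bfw\bfw^\top\hatC^{1/2}$ has largest eigenvalue equal to its trace. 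Together with $\lambda\le\sigma_\target^2-\bfw^\top\hatC\bfw$, this gives (\ref{convex-variance}).

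Your secondary sketch is essentially that argument, with $\kappa$ kept in the scalar block instead of rescaling. Your block $\lambda\hatC^{-1}-\bfw\bfw^\top$ is the correct one; the paper's first displayed LMI misprints $\hatC^{-1}$ as $\hatC^{-1}\bfw\bfw^\top$.

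Your main argument instead computes the inner maximum directly. Because $a_\sigma$ depends on $\bfbeta$ only through the linear form $\bfw^\top(\bfbeta-\hatbeta)$, and $\E(r;\D)$ is symmetric about $\hatbeta$, the worst case of $(\bfw^\top(\bfbeta-\hatbeta))^2$ is the square of the support-function value $\sqrt{e^{2r}-1}\,\|\hatC^{1/2}\bfw\|$. That is the very quantity that produces (\ref{SOCP-Mean}).

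Your route buys a shorter, fully elementary proof. It has an explicit maximizer, needs no constraint qualification, and treats $r=0$ and $\bfw=0$ uniformly. It also shows that, for this constraint, the non-concavity of $a_\sigma$ in $\bfbeta$ is no real obstacle to the support-function approach used for the return constraint. The S-procedure, in exchange, is the systematic tool. It yields an exact LMI reformulation for the worst case of any quadratic in $\bfbeta$ over an ellipsoid, for instance higher-rank or non-homogeneous risk terms, where no Cauchy--Schwarz shortcut is available.
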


This leads to the JERO-MV model with both the robust return constraint (\ref{Robust-Return-Constraint}) and the robust risk constraint (\ref{Robust-variance-constraint}).
We use the equivalent reformulations (\ref{SOCP-Mean}) and (\ref{convex-variance}) for the two constraints. JERO-MV model is as follows:
\be \label{JERO-MV}
\begin{array}{ll}
	\max\limits_{r, \bfw} & r \\
	\mbox{s.t.} & \sqrt{ e^{2r} - 1} \|  \hatC^{1/2} \bfw \| \le   \hatbeta^\top \bfw - \mu_\target \\ [1ex]
	&  e^{2r}  \|  \hatC^{1/2} \bfw \|^2 \le  \sigma_\target^2 \\ [1ex]
	& \bfone_n^\top \bfw = 1, \ \ r \ge 0 . 
\end{array} 
\ee  
We will first address JERO-MV, where only budget constraint $\bfone_n^\top \bfw =1$ is included. 
Later on, we will include more constraints.

\section{Efficient Portfolio by JERO-MV} \label{Section-EF}

The main result is in this part is that JERO-MV (\ref{JERO-MV}) selects an efficient portfolio and its solution is unique.
Moreover, we characterize the location of the optimal portfolio on the efficient frontier.
It turns out that the location depends on the condition whether the return target $\mu_\target$ is bigger than $\mu_\mvp$ or not.
The two cases $\mu_\target \ge \mu_\mvp$ and $\mu_\target < \mu_\mvp$ are considered  separately.

\subsection{The case: $\mu_\target \ge \mu_\mvp$} \label{subEF1}

There are three efficient portfolios that play an important role here.
The first one is the efficient portfolio that realizes the return target $\mu_\target$.
We name it $\bfw_{\mu\target}$. 
The corresponding return and standard deviation on the efficient frontier is denoted by
$(\mu_\target, \sigma_{\bfmu\target})$ (Point B in Fig.~\ref{Fig-largeTarget}).
The second portfolio realizes the robust risk target.
We call it $\bfw_{\sigma\target}$.
Its corresponding return and standard deviation on the efficient frontier is denoted by
$(\mu_{\sigma\target}, \sigma_\target)$ (Point C in Fig.~\ref{Fig-largeTarget}).
Please note the notation used here: $\mu_\target$ and $\sigma_\target$ are two parameters the user has to specify for JERO-MV model (\ref{JERO-MV}). 
The third portfolio is the maximum Sharpe-ratio portfolio (simply referred as Sharpe portfolio) with respect to the
given return target $\mu_\target$ and is defined by
\be \label{SharpePortfolio}
  \bfw_\sharpe = \arg\max_{\bfw} \left\{
   \frac{\hatbeta^\top \bfw - \mu_\target}{ \sigma_{\bfw}} \ \ \Big\vert \
   	\bfone_n^\top \bfw = 1, \ 
   	\sigma_\bfw \le \sigma_\target
  \right\} .
\ee
Note that we enforced the constraint $\sigma_\bfw \le \sigma_\target$. 
The reason is that, without it, the Sharpe portfolio would not exist unless $\mu_\target < \mu_\mvp$, which is considered in the next case.
The expected return and standard deviation of Sharpe portfolio is denoted by
$(\bfmu_\sharpe, \sigma_\sharpe)$. 
In order for JERO-MV to be feasible, we must set
$
\sigma_\target \ge \sigma_{\mu\target} .
$
In other words, Point C must be above Point B on Fig.~\ref{Fig-largeTarget}.
Our first result shows that Sharpe portfolio is $\bfw_{\sigma\target}$.

\begin{proposition} \label{Prop-Sharpe}
Suppose $\mu_\target \ge \mu_\mvp$ and $\sigma_\target \ge \sigma_{\mu\target}$. We have
$
 \bfw_\sharpe = \bfw_{\sigma\target} .
$ 
\end{proposition}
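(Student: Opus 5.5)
Our plan is to reduce the Sharpe problem (\ref{SharpePortfolio}) to a one-dimensional problem on the efficient frontier, and then show that the reduced objective is nondecreasing in $\gamma$, so that the risk constraint $\sigma_\bfw \le \sigma_\target$ binds.

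\emph{Step 1: reduce to the frontier.} Fix any feasible $\bfw$ with $\hatbeta^\top\bfw = \mu \ge \mu_\target$. Replacing $\bfw$ by the frontier portfolio of (\ref{Markowitz-MV}) with the same return $\mu$ keeps the numerator unchanged, does not increase $\sigma_\bfw$, and so preserves feasibility. Since $\mu\ge\mu_\target\ge\mu_\mvp$, that frontier portfolio is $\bfw_\gamma$ for some $\gamma\ge 0$ by Proposition~\ref{Prop-EF}(i). Hence the Sharpe ratio can only increase, and it suffices to maximize over $\bfw_\gamma$ with $\mu_\gamma\ge\mu_\target$.

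Portfolios with $\hatbeta^\top\bfw<\mu_\target$ give a negative ratio. The point B, namely $\bfw_{\mu\target}$, is feasible because $\sigma_{\mu\target}\le\sigma_\target$, and it gives ratio $0$. So those portfolios can be discarded. If $\mu_\target=\mu_\mvp$ we set $\bfw_{\mu\target}=\bfw_\mvp$, and if $\sigma_\target=\sigma_{\mu\target}$ the feasible set on the frontier is the single point, which we treat trivially.

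\emph{Step 2: the one-variable function.} By Proposition~\ref{Prop-EF}(ii), $\mu_\gamma = \mu_\mvp + \gamma\mu_0$ and $\sigma_\gamma = \sqrt{\sigma_\mvp^2+\gamma^2}$. Write $d := \mu_\mvp - \mu_\target \le 0$. The reduced objective is
\[
f(\gamma) = \frac{d + \gamma\mu_0}{\sqrt{\sigma_\mvp^2+\gamma^2}}, \qquad \gamma\in[\gamma_B,\gamma_C],
\]
where $\gamma_B = -d/\mu_0 \ge 0$ corresponds to point B and $\gamma_C = \sqrt{\sigma_\target^2-\sigma_\mvp^2}$ corresponds to point C.

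Differentiating gives
\[
f'(\gamma) = \frac{\mu_0\sigma_\mvp^2 - d\gamma}{(\sigma_\mvp^2+\gamma^2)^{3/2}}.
\]
The numerator is strictly positive because $\mu_0>0$, $\sigma_\mvp>0$, $-d\ge 0$ and $\gamma\ge0$. Here $\mu_0>0$ requires $\hatbeta$ not to be parallel to $\bfone_n$, which we assume, since otherwise the frontier degenerates. Hence $f$ is strictly increasing on the interval, and its unique maximizer is $\gamma_C$.

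\emph{Step 3: identify the maximizer.} The portfolio $\bfw_{\gamma_C}$ has $\sigma=\sigma_\target$ and lies on $\F^+$, so it is exactly $\bfw_{\sigma\target}$. Uniqueness of the Sharpe portfolio follows from two facts. The maximizer of $f$ is strict. Any non-frontier portfolio with the same return has strictly larger variance, because (\ref{Markowitz-MV}) has a unique solution ($\hatC\succ0$), and therefore a strictly smaller positive ratio.

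The main obstacle we expect is the bookkeeping in Step 1: justifying that non-frontier portfolios and portfolios with return below $\mu_\target$ can be discarded. The same care is needed for the boundary cases $\mu_\target=\mu_\mvp$ and $\sigma_\target=\sigma_{\mu\target}$. The monotonicity computation in Step 2 is routine.
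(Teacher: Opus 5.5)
Your proposal is correct and follows essentially the same route as the paper's proof: you restrict to the frontier portfolios $\bfw_\gamma$, compute $f'(\gamma) = \bigl(\mu_0\sigma_\mvp^2 + \gamma(\mu_\target-\mu_\mvp)\bigr)/(\sigma_\mvp^2+\gamma^2)^{3/2} \ge 0$, and conclude that the Sharpe ratio is maximized where $\sigma_\gamma = \sigma_\target$. Your Step~1 (discarding portfolios with return below $\mu_\target$ and replacing each remaining one by the frontier portfolio of equal return) and your uniqueness argument spell out the reduction to the frontier, which the paper takes for granted.
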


To facilitate our discussion, for a given portfolio $\bfw$, define
\begin{equation*}
    \begin{aligned}
        r_m(\bfw) := \frac 12 \ln \Big(
  1 +  (\hatbeta^\top \bfw - \mu_\target)^2 / \sigma_{\bfw}^2
  \Big) \quad \mbox{and} \quad r_v(\bfw) := \frac 12 \ln \Big(
  \sigma_\target^2 / \sigma_{\bfw}^2
  \Big) .
    \end{aligned}
\end{equation*}

In fact, $ r_m(\bfw)$ is the value of $r$ when the robust return constraint in JERO-MV model (\ref{JERO-MV}) is binding (i.e., satisfied as equality).
Similarly,  $r_v(\bfw)$ is the value of $r$ when the robust risk constraint is binding.
Since the exponential function $e^{2r}$ is increasing in $r$, the optimal solution $r_*$ for JERO-MV model (\ref{JERO-MV}) is given by
\be \label{Optimal-r}
 \bal
    r_* = \max_{\bfw} \left\{
    \min\{ r_m(\bfw), \  r_v(\bfw)\} \ | \ \mbox{$\bfw$ feasible to (\ref{JERO-MV})} \right\} . 
 \eal
\ee
Any portfolio achieving $r_*$ is an optimal portfolio, denoted by $\bfw_*$.
Before we move on, we emphasize that any feasible portfolio to JERO-MV model (\ref{JERO-MV}) must have its variance falling into 
the interval $[\sigma_{\mu\target}^2, \ \sigma_\target^2]$.
For $\sigma \in [\sigma_{\mu\target}, \ \sigma_\target]$, 
we consider all the portfolios having the same variance $\sigma^2$:
\[
\W_\sigma := \left\{
\bfw \ \Big| \ \sigma_\bfw = \sigma, \ \ \bfw \ \mbox{is feasible}
\right\}.
\]
We have the following results.

\begin{proposition} \label{Prop-r-mv}
Let $\sigma, \sigma_1, \sigma_2 \in [\sigma_{\mu\target}, \ \sigma_\target]$ with $\sigma_1 \le \sigma_2$.
The following statements hold.

\begin{itemize}
	\item[(i)] We have
	\[
	  r_v(\bfw_1) \ge r_v (\bfw_2), \quad \forall\ \bfw_i \in \W_{\sigma_i}, \ \ i=1,2 .
	\]
    Moreover, $r_v(\bfw_{\sigma\target}) = 0$.
	
	\item[(ii)] Let $\bfw_\ef^\sigma \in \W_\sigma$ be the efficient portfolio whose variance is $\sigma^2$. 
	We have
	\bes
	    \bal
	   & r_m (\bfw^\sigma_\ef) \ge r_m (\bfw) , \quad \forall \ \bfw \in \W_\sigma \quad \mbox{and} \quad r_m(\bfw_\ef^{\sigma_1}) \le r_m (\bfw_\ef^{\sigma_2}) .
		\eal
	\ees
	Moreover, $r_m (\bfw_{\mu\target}) = 0$.
	
	\item[(iii)] There exists a unique efficient portfolio $\bfw_\ef$ such that the optimal objective $r_*$ satisfies 
	\be \label{Equilibrium}
	   r_* = r_m(\bfw_\ef) = r_v (\bfw_\ef) .
	\ee
	
\end{itemize}
	
\end{proposition}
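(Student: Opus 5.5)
Parts (i) and (ii) follow directly from the definitions of $r_v$ and $r_m$ together with the frontier description in Proposition~\ref{Prop-EF}. Part (iii) is an intermediate-value and monotonicity argument along the upper efficient frontier.

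\textbf{Part (i).} By definition $r_v(\bfw)=\frac12\ln(\sigma_\target^2/\sigma_\bfw^2)$ depends on $\bfw$ only through $\sigma_\bfw$. It is strictly decreasing in $\sigma_\bfw>0$. Hence for $\bfw_i\in\W_{\sigma_i}$ with $\sigma_1\le\sigma_2$ we get $r_v(\bfw_1)\ge r_v(\bfw_2)$. Since $\sigma_{\bfw_{\sigma\target}}=\sigma_\target$, we get $r_v(\bfw_{\sigma\target})=0$.

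\textbf{Part (ii).} On $\W_\sigma$ the denominator $\sigma_\bfw=\sigma$ is fixed. The map $t\mapsto \frac12\ln(1+t^2/\sigma^2)$ is increasing for $t\ge0$, and feasibility of the return constraint gives $t=\hatbeta^\top\bfw-\mu_\target\ge0$. So maximizing $r_m$ over $\W_\sigma$ amounts to maximizing the return $\hatbeta^\top\bfw$ subject to $\bfone_n^\top\bfw=1$ and $\sigma_\bfw=\sigma$. Because $\sigma\ge\sigma_{\mu\target}\ge\sigma_\mvp$, this maximum is attained on the upper frontier $\F^+$ at $\bfw^\sigma_\ef$.

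For the monotonicity in $\sigma$, parametrize $\F^+$ by $\gamma\ge0$. By Proposition~\ref{Prop-EF}(ii), $\sigma_\gamma=\sqrt{\sigma_\mvp^2+\gamma^2}$ and $\mu_\gamma=\mu_\mvp+\mu_0\gamma$. Put $f(\gamma)=(\mu_\mvp-\mu_\target+\mu_0\gamma)/\sqrt{\sigma_\mvp^2+\gamma^2}$. On the feasible range $\mu_\gamma\ge\mu_\target$ we have $f\ge0$, so it suffices to show $f$ is nondecreasing for $\gamma\in[\gamma_{\mu\target},\gamma_{\sigma\target}]$. Differentiating gives
$$f'(\gamma)\propto \mu_0\sigma_\mvp^2+\gamma(\mu_\target-\mu_\mvp).$$
This is nonnegative because $\mu_\target\ge\mu_\mvp$, $\mu_0\ge0$ and $\gamma\ge0$; this is exactly where the case hypothesis enters. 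Finally, $\bfw_{\mu\target}$ has return $\mu_\target$, so $r_m(\bfw_{\mu\target})=\frac12\ln 1=0$.

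\textbf{Part (iii).} By (ii), replacing any feasible $\bfw$ by the efficient portfolio with the same variance does not decrease $r_m$ and leaves $r_v$ unchanged. Hence the maximization (\ref{Optimal-r}) reduces to a one-dimensional problem over $\gamma\in[\gamma_{\mu\target},\gamma_{\sigma\target}]$. Let $g(\gamma)=r_m(\bfw_\gamma)$, which is continuous and nondecreasing by (ii). Let $h(\gamma)=r_v(\bfw_\gamma)$, which is continuous and strictly decreasing by (i) since $\sigma_\gamma$ is strictly increasing in $\gamma$.

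At the left endpoint $g=0\le h$. At the right endpoint $h=0\le g$. The intermediate value theorem applied to $g-h$ then gives a crossing $\bar\gamma$. The difference $g-h$ is strictly increasing, so the crossing is unique.

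To see that the crossing is optimal, split according to the side of $\bar\gamma$:
\begin{itemize}
\item for $\gamma<\bar\gamma$, $\min\{g,h\}=g(\gamma)\le g(\bar\gamma)$;
\item for $\gamma>\bar\gamma$, $\min\{g,h\}=h(\gamma)<h(\bar\gamma)$.
\end{itemize}
So $\min\{g,h\}$ is maximized exactly at $\bar\gamma$, and $\bfw_\ef=\bfw_{\bar\gamma}$ satisfies (\ref{Equilibrium}).

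\textbf{Main obstacle.} The delicate point is uniqueness of $\bfw_\ef$ as an optimal portfolio, rather than just uniqueness of the crossing. On the efficient side, uniqueness follows because $h$ is strictly decreasing. For non-efficient portfolios, I would use that a non-efficient $\bfw$ with $\sigma_\bfw=\sigma$ has strictly smaller return than $\bfw^\sigma_\ef$, hence strictly smaller $r_m$. Such a $\bfw$ could only tie if $r_v$ were the binding term, and that would force $\sigma>\sigma_{\bar\gamma}$, giving $r_v<r_*$. I would also check the degenerate case $\mu_0=0$ separately.
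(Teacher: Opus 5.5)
Your proposal is correct and follows essentially the same route as the paper. For (i) you use monotonicity of $r_v$ in $\sigma_\bfw$; for (ii) you combine the fact that the efficient portfolio has the largest return at fixed variance with the Sharpe-ratio derivative $\mu_0\sigma_\mvp^2+\gamma(\mu_\target-\mu_\mvp)\ge 0$ from the proof of Proposition~\ref{Prop-Sharpe}; and for (iii) you use an intermediate-value crossing of $r_m$ and $r_v$ along the frontier. Your version is more careful than the paper's: you make explicit the sign condition $\hatbeta^\top\bfw-\mu_\target\ge 0$ needed in (ii), you use strict decrease of $r_v$ to get a unique crossing, and you check directly that the crossing maximizes $\min\{r_m,r_v\}$, all of which the paper only asserts.
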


For understanding the behaviour of the JERO-MV portfolio, it is important to emphasize that
the two quantities $r_m(\bfw)$ and $r_v(\bfw)$ also become equal at other portfolios. 
However, the meeting point on the efficient frontier by (\ref{Equilibrium}) gives the largest value $r_*$ because
$r_m(\bfw_\ef)$ on the efficient frontier is greater than any other portfolios that has the same variance as that of $\bfw_\ef$. Therefore, maximizing $r$ is to select a portfolio from the efficient frontier.

After those preparations, we are ready to characterize the optimal efficient portfolio, which, together with several other
portfolios,  is plotted in in Fig.~\ref{Fig-largeTarget}.

\begin{theorem} \label{Thm-EF-A}
Suppose $\mu_\target \ge \mu_\mvp$ and $\sigma_\target \ge \sigma_{\mu\target}$.
	The optimal portfolio $\bfw_\rmv$ of JERO-MV (\ref{JERO-MV}) is efficient and is given by
\be \label{Optimal_solution}
   \bfw_{rmv} = \bfw_{mvp} + \gamma \bfw_0  
\ee
with $\gamma := \left\{ \mu_0 (\mu_{target} - \mu_{mvp}) + \sqrt{\Delta} \right\} / {(1+\mu_0^2)}$
and
$
  \Delta := \sigma_{target}^2 - \sigma_{mvp}^2 + (\mu_{\sigma target} - \mu_{target}) (\mu_{\sigma target} + \mu_{target} - 2\mu_{mvp})
$.
\end{theorem}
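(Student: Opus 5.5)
By Proposition~\ref{Prop-r-mv}(iii), an optimal portfolio of (\ref{JERO-MV}) is the unique efficient portfolio $\bfw_\ef$ at which $r_m(\bfw_\ef)=r_v(\bfw_\ef)$. So I only need to locate that point on the frontier. By Proposition~\ref{Prop-EF}(i), I write the candidate as $\bfw_\gamma=\bfw_\mvp+\gamma\bfw_0$ with $\gamma\ge 0$. Proposition~\ref{Prop-EF}(ii) then gives
\[
\mu_\gamma=\mu_\mvp+\gamma\mu_0, \qquad \sigma_\gamma^2=\sigma_\mvp^2+\gamma^2 .
\]
It remains to solve the equilibrium equation in $\gamma$ and select the correct root.

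\textbf{Step 1: reduce the equilibrium to a quadratic.} Using the definitions of $r_m$ and $r_v$, the condition $r_m(\bfw_\gamma)=r_v(\bfw_\gamma)$ is equivalent to
\[
\sigma_\gamma^2+(\mu_\gamma-\mu_\target)^2=\sigma_\target^2 .
\]
Put $d:=\mu_\target-\mu_\mvp\ge 0$ and substitute the expressions above. This gives
\[
h(\gamma):=(1+\mu_0^2)\gamma^2-2\mu_0 d\,\gamma+\sigma_\mvp^2+d^2-\sigma_\target^2=0 .
\]
Its reduced discriminant is $\mu_0^2d^2-(1+\mu_0^2)(\sigma_\mvp^2+d^2-\sigma_\target^2)=(1+\mu_0^2)(\sigma_\target^2-\sigma_\mvp^2)-d^2$.

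\textbf{Step 2: identify this discriminant with $\Delta$.} Point C $(\mu_{\sigma\target},\sigma_\target)$ lies on the frontier, so Proposition~\ref{Prop-EF}(ii) gives $\sigma_\target^2-\sigma_\mvp^2=(\mu_{\sigma\target}-\mu_\mvp)^2/\mu_0^2$. Expanding $\Delta$ as stated, $(\mu_{\sigma\target}-\mu_\target)(\mu_{\sigma\target}+\mu_\target-2\mu_\mvp)=(\mu_{\sigma\target}-\mu_\mvp)^2-d^2$. Hence
\[
\Delta=\sigma_\target^2-\sigma_\mvp^2+\mu_0^2(\sigma_\target^2-\sigma_\mvp^2)-d^2 ,
\]
which is exactly the reduced discriminant. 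The roots are therefore $\gamma_\pm=\{\mu_0 d\pm\sqrt\Delta\}/(1+\mu_0^2)$.

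\textbf{Step 3: choose the root (main point of care).} Feasible efficient portfolios have $\sigma_\gamma\in[\sigma_{\mu\target},\sigma_\target]$, i.e.\ $\gamma\in[\gamma_B,\gamma_C]$ with $\gamma_B=d/\mu_0$ and $\gamma_C=(\mu_{\sigma\target}-\mu_\mvp)/\mu_0$.
\begin{itemize}
\item At Point B, $\mu_{\gamma_B}=\mu_\target$, so $h(\gamma_B)=\sigma_{\mu\target}^2-\sigma_\target^2\le 0$ by the assumption $\sigma_\target\ge\sigma_{\mu\target}$.
\item At Point C, $\sigma_{\gamma_C}=\sigma_\target$, so $h(\gamma_C)=(\mu_{\sigma\target}-\mu_\target)^2\ge 0$.
\end{itemize}
Since $h$ is an upward parabola, $\Delta\ge 0$ and $\gamma_B$ lies between the two roots. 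Then $h(\gamma_C)\ge 0$ with $\gamma_C\ge\gamma_B$ forces $\gamma_C\ge\gamma_+$. So the only root in $[\gamma_B,\gamma_C]$ is the larger one, $\gamma_+\ge\gamma_B\ge 0$. This is the $\gamma$ in (\ref{Optimal_solution}).

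Uniqueness of the optimal portfolio, and the fact that it is efficient, then follow from Proposition~\ref{Prop-r-mv}(iii). The equality $\sigma_\target=\sigma_{\mu\target}$, where the two roots may coincide, is a harmless degenerate case.
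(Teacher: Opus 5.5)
Your proof is correct. Steps 1 and 2 match the paper: you use Proposition~\ref{Prop-EF} to reduce $r_m=r_v$ on the frontier to the quadratic in $\gamma$, and you identify its reduced discriminant with $\Delta$ via $\mu_0^2(\sigma_\target^2-\sigma_\mvp^2)=(\mu_{\sigma\target}-\mu_\mvp)^2$.

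You differ from the paper in how the root is selected. The paper claims $\Delta_1=4\Delta\ge 4\mu_0^2(\mu_\target-\mu_\mvp)^2$, so that the smaller root is nonpositive and $\gamma_+$ is ``the only positive root.'' That chain uses $\sigma_\target^2-\sigma_\mvp^2=\mu_0^2(\mu_{\sigma\target}-\mu_\mvp)^2$. Proposition~\ref{Prop-EF}(ii) actually gives $(\mu_{\sigma\target}-\mu_\mvp)^2/\mu_0^2$. The claimed bound is equivalent to $\sigma_\target^2\ge\sigma_\mvp^2+(\mu_\target-\mu_\mvp)^2$, and the hypotheses guarantee this only when $\mu_0\le 1$. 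For example, $\mu_0=2$, $\sigma_\mvp=1$, $\mu_\target-\mu_\mvp=1$, $\sigma_\target^2=1.5$ gives roots $\approx 0.645$ and $\approx 0.155$, both positive.

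Your bracketing argument avoids this. From $h(\gamma_B)\le 0\le h(\gamma_C)$, with $[\gamma_B,\gamma_C]$ the feasible range on the frontier, you select $\gamma_+$ by feasibility rather than by sign, so it works for every $\mu_0$. In the example, $\gamma_B=0.5$ excludes the smaller root and $\gamma_C=\sqrt{0.5}\approx 0.707$ admits the larger. Your argument also shows directly that $\gamma_+\in[\gamma_B,\gamma_C]$, so the equilibrium portfolio satisfies both targets, rather than inferring this from Proposition~\ref{Prop-r-mv}(iii). The paper's version is shorter but, as written, holds only in the regime $\mu_0\le 1$. Both proofs rely equally on Proposition~\ref{Prop-r-mv}(iii) for efficiency and uniqueness of the optimal portfolio.
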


\begin{figure}[h!]
	\centering
	\includegraphics[width= 0.8\textwidth, height = 0.5\textwidth]{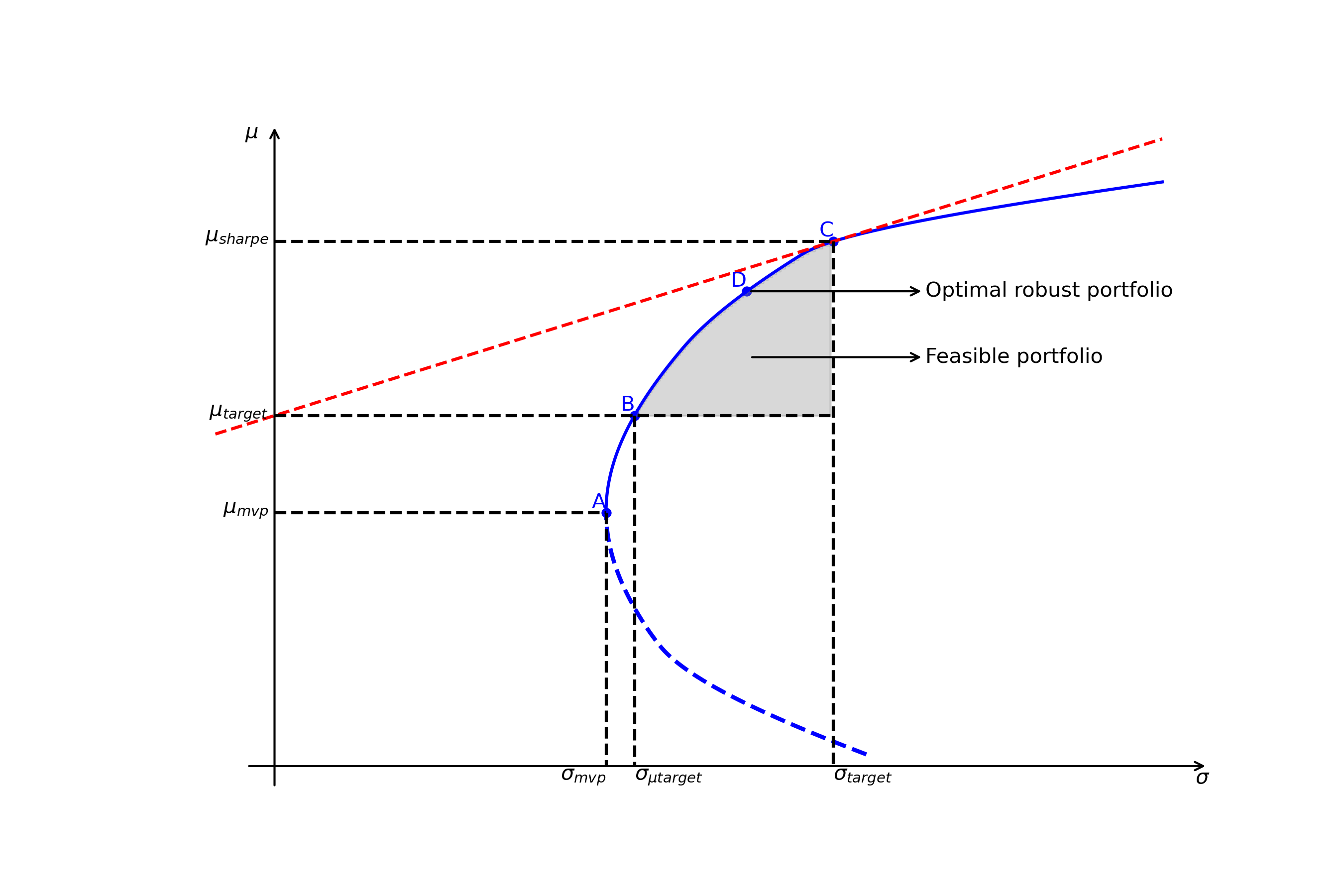}
	\caption{The case: $\mu_\target \ge \mu_\mvp$.
	Point A is the global minimum variance portfolio.
	Point B is the robust minimum-return efficient portfolio.
	Point C is the robust maximum-risk efficient portfolio and it is also the robust Sharpe portfolio.
	Point D is the optimal robust return-risk efficient portfolio $\bfw_\rmv$ and is computed via the formula 
	(\ref{Optimal_solution}).
	}
	\label{Fig-largeTarget}
\end{figure}

\subsection{The case: $\mu_\target < \mu_\mvp$} \label{subEF2}

This case could happen in the following scenario. 
Suppose we have a risk-free asset whose return is $r_0$ and it is usually less than $\mu_\mvp$.
We would like the portfolio return based on each $\bfbeta$ from the uncertainty set is not less than $r_0$.
That is, we set $\mu_\target = r_0$.
In this case, the well-known  tangent portfolio $\bfw_T$ in the Markowitz MV model exists
and is given by
\[
  \bfw_T = \frac{1}{b - r_0 a} \hatC^{-1} (\hatbeta - r_0 \bfone_n),
\]
and its return and variance are respectively given by \cite[Theorem~5.2.1]{brugiere2020quantitative}:
\be \label{Eq-wT}
 \bal
  \mu_T = r_0 + \frac{1}{b - r_0 a} \| \hatbeta - r_0 \bfone_n \|^2_{\hatC^{-1}} \quad \mbox{and} \quad \sigma_T^2 = \frac{1}{(b - r_0 a)^2} \| \hatbeta - r_0 \bfone_n \|^2_{\hatC^{-1}},
  \eal
\ee
where
$
a := \bfone_n^\top \hatC^{-1} \bfone_n
$ and
$
b := \bfone_n^\top \hatC^{-1} \hatbeta.
$
The robust return constraint alone will select $\bfw_T$ as it yields the largest Sharpe ratio with respect to $r_0$. 
However, when the robust risk constraint is also considered, whether
 JERO-MV model (\ref{JERO-MV}) selects $\bfw_T$ or not depends on the magnitude of $\sigma_\target$.
Two risk levels play a role in the decision. 
 Define
\bes
 \bal
  \sigma_\ell^2 := \sigma_\mvp^2 + (\mu_\mvp - \mu_\target)^2 \quad \mbox{and} \quad \sigma^2_u := \sigma^2_T + (\mu_T - \mu_\target)^2.
  \eal
\ees

\begin{theorem} \label{Thm-EF-B}
Suppose $\mu_\target < \mu_\mvp$ and $\sigma_\target \ge \sigma_\mvp$. The optimal solution of JERO-MV model (\ref{JERO-MV}) is given as follows:
\[
  \bfw_\rmv = \left\{
  \begin{array}{ll}
  		\bfw_\mvp \ & \mbox{if} \ \sigma_\target \le \sigma_\ell 
  	\\
  	\mbox{given by (\ref{Optimal_solution}) } \ & \mbox{if} \ \sigma_\target \in (\sigma_\ell, \; \sigma_u) \\
  \bfw_T \ & \mbox{if} \ \sigma_\target \ge \sigma_u .  	
  \end{array} 
  \right .
\]
\end{theorem}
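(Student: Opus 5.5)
The plan is to reduce the problem to a one-dimensional search along the efficient frontier, as in Proposition~\ref{Prop-r-mv}, and then compare $r_m$ and $r_v$ along it.

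\textbf{Step 1: reduction to the frontier.} By (\ref{Optimal-r}), $r_*=\max_{\bfw}\min\{r_m(\bfw),r_v(\bfw)\}$. Fix a variance level $\sigma^2$. Then $r_v$ is constant on $\W_\sigma$. Since $r_m$ is increasing in $(\hatbeta^\top\bfw-\mu_\target)^2$, the feasible portfolio with the largest return among those of variance $\sigma^2$ maximizes $r_m$. That portfolio is the upper-branch efficient one: feasibility forces $\hatbeta^\top\bfw\ge\mu_\target$, so only the largest achievable return matters. Hence I may restrict to $\bfw_\gamma=\bfw_\mvp+\gamma\bfw_0$ with $\gamma\ge0$. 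This part of the argument of Proposition~\ref{Prop-r-mv}(ii) does not depend on the sign of $\mu_\target-\mu_\mvp$, and I would re-check this explicitly.

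\textbf{Step 2: the key comparison function.} By Proposition~\ref{Prop-EF}, $\mu_\gamma=\mu_\mvp+\gamma\mu_0$ and $\sigma_\gamma^2=\sigma_\mvp^2+\gamma^2$. Then
\[
e^{2r_m(\bfw_\gamma)}=\frac{g(\gamma)}{\sigma_\gamma^2},\qquad e^{2r_v(\bfw_\gamma)}=\frac{\sigma_\target^2}{\sigma_\gamma^2},\qquad g(\gamma):=\sigma_\gamma^2+(\mu_\gamma-\mu_\target)^2 .
\]
So $r_m\le r_v$ exactly when $g(\gamma)\le\sigma_\target^2$. Because $\mu_\mvp>\mu_\target$, $g$ is a strictly increasing quadratic on $\gamma\ge0$, with $g(0)=\sigma_\ell^2$. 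I will also use two monotonicity facts. First, $r_v$ decreases in $\gamma$. Second, $r_m$ is a monotone function of the Sharpe ratio $(\mu_\gamma-\mu_\target)/\sigma_\gamma$, which is positive here. That ratio increases on $[0,\gamma_T]$ and decreases afterwards, where $\bfw_{\gamma_T}=\bfw_T$ is the tangent portfolio of (\ref{Eq-wT}). Consequently $g(\gamma_T)=\sigma_u^2$.

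\textbf{Step 3: the three cases.}
\begin{itemize}
\item If $\sigma_\target\le\sigma_\ell$, then $g(\gamma)\ge\sigma_\target^2$ for all $\gamma\ge0$. Hence the minimum equals $r_v$, which is maximized at $\gamma=0$, giving $\bfw_\mvp$. This is feasible because $\sigma_\target\ge\sigma_\mvp$.
\item If $\sigma_\target\ge\sigma_u$, then $r_m\le r_v$ on $[0,\gamma_T]$. So the minimum equals $r_m$ there and reaches $r_m(\bfw_T)$ at $\gamma_T$. Everywhere else the minimum is at most $r_m\le r_m(\bfw_T)$. Hence the optimum is $\bfw_T$.
\item If $\sigma_\ell<\sigma_\target<\sigma_u$, there is a unique $\gamma_c\in(0,\gamma_T)$ with $g(\gamma_c)=\sigma_\target^2$. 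On $[0,\gamma_c]$ the minimum equals $r_m$, which is increasing. On $[\gamma_c,\infty)$ it equals $r_v$, which is decreasing. So the optimum is $\gamma_c$, and uniqueness follows from the strict monotonicity.
\end{itemize}

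\textbf{Step 4: identifying $\gamma_c$.} Solving $g(\gamma)=\sigma_\target^2$ means solving
\[
(1+\mu_0^2)\gamma^2+2\mu_0(\mu_\mvp-\mu_\target)\gamma+\sigma_\mvp^2+(\mu_\mvp-\mu_\target)^2-\sigma_\target^2=0 .
\]
Its positive root has to be matched with the expression in (\ref{Optimal_solution}). For this I would use $\mu_{\sigma\target}=\mu_\mvp+\mu_0\sqrt{\sigma_\target^2-\sigma_\mvp^2}$ to rewrite the discriminant in terms of $\Delta$.

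\textbf{Main obstacle.} I expect this last algebraic identification to be the main obstacle, since one has to check the sign conventions carefully. The secondary delicate point is the justification in Step 1 that the lower branch and interior portfolios can be discarded when $\mu_\target<\mu_\mvp$.
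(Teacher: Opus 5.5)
Your proposal is correct and follows essentially the same route as the paper. You reduce to the upper frontier via Proposition~\ref{Prop-r-mv}(ii). Your conditions $g(0)\ge\sigma_\target^2$ and $g(\gamma_T)\le\sigma_\target^2$ are exactly the paper's equivalences (\ref{Bound-1}) and (\ref{Bound-2}). In the middle case you solve the quadratic (\ref{Eq-rho}), and the identification $\Delta_1/4=(1+\mu_0^2)(\sigma_\target^2-\sigma_\mvp^2)-(\mu_\target-\mu_\mvp)^2=\Delta$ is the routine computation already done in Appendix~D, so it is not a real obstacle. Your strictly increasing $g$ actually makes the middle case more rigorous than the paper's, which just invokes the equilibrium condition (\ref{Equilibrium}) (proved under $\mu_\target\ge\mu_\mvp$) without checking that the crossing lies before $\bfw_T$.
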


\begin{figure}[h!]
	\centering
	\includegraphics[width= 0.8\textwidth, height = 0.5\textwidth]{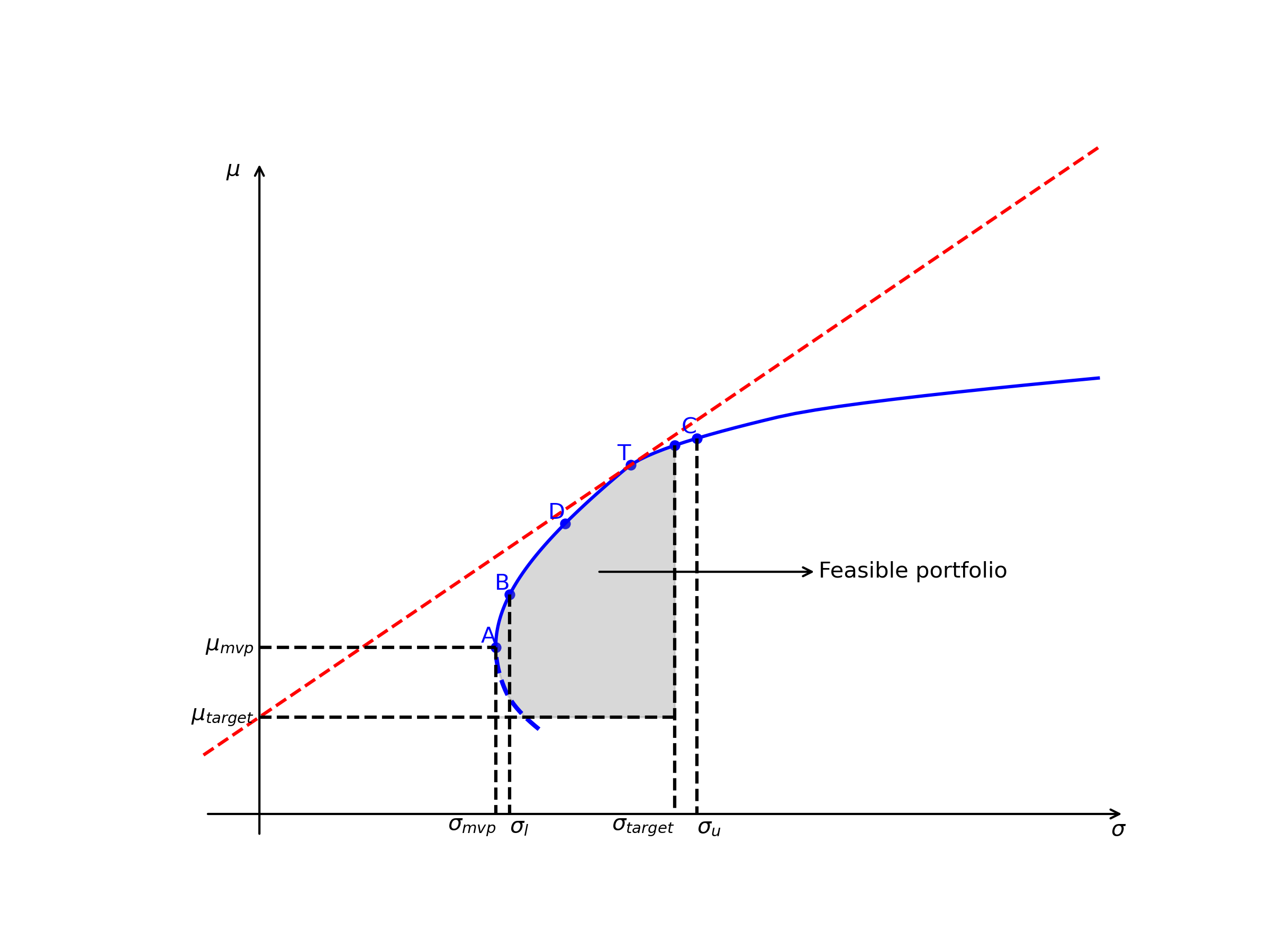}
	\caption{The case: $\mu_\target < \mu_\mvp$.
		Point A is the global minimum variance portfolio.
		Point B is the efficient portfolio corresponding to $\sigma_\ell$.
		Point C is the efficient portfolio corresponding to $\sigma_u$.
		Point T is the tangent portfolio.
		When $\sigma_\target \le \sigma_\ell$, Point A is the optimal robust return-risk portfolio.
		When $\sigma_\target \ge \sigma_u$, Point T is the optimal robust return-risk portfolio.
		In all other cases, 
		Point D is the optimal robust return-risk efficient portfolio $\bfw_\rmv$ and is computed via the formula 
		(\ref{Optimal_solution}).
	}
	\label{Fig-smallTarget}
\end{figure}

\begin{remark}
	When setting $\mu_\target = r_0$ (risk-free return), Fig.~\ref{Fig-smallTarget} is similar to the efficient frontier for the classical MV model with one key difference. There is risk-tolerance zone $(\sigma_l,\sigma_u)$. When the risk level is set above $\sigma_u^2$, JERO-MV completely ignores it and always chooses the tangent portfolio $\bfw_T$. Therefore, JERO-MV does not encourage too aggressive investments. On the other hand, if the risk level is set below $\sigma_l^2$, JERO-MV simply picks the $\bfw_\mvp$ portfolio. This means that if the investment is not risky enough, JERO would prefer the portfolio that has the lowest risk. When the risk level falls within the tolerance zone, then an efficient portfolio is selected to meet the preset return and risk targets. When $\mu_\target \geq \mu_\mvp$, JERO-MV selects an efficient portfolio just as the way it was when the risk level falls within the tolerance zone. Those comments are well reflected in both Fig.~\ref{Fig-largeTarget} and Fig.~\ref{Fig-smallTarget}. 
\end{remark}

\subsection{Including more constraints}

The analytical results reported above are valid for the budge constraint only.
In practice, there might be other constraints enforced. For example,
shorting ($w_i <0$ for some assets) is often not allowed.
We let $\bfw \in \P$ to denote a collection of such constraints. It may include 
\[
	L \le \bfw \le U \quad (\mbox{lower and upper bounds on \bfw})
\]
\[
	\| \bfw\|_1 = |w_1| + \cdots + |w_n| \leq c \quad (\ell_1\mbox{-norm constraint}),
\]
where $L, U \in \Re^n$ such that $L \le U$, and $c>0$ is a constant. 
In particular, when $L=0$ and $U = \bfone_n$, the lower and the upper bound constraint becomes no shorting constraint
$(\bfw \ge 0$).
The use of $\ell_1$-norm constraint is getting popular as it yields sparse portfolios, see \cite{Brodie2009Sparse, Fan2012Vast, Zhao2021optimal}.
We assume $\P$ be convex.
The robust portfolio selection problem (JERO-P) is then defined by 
\be \label{JERO-P}
\begin{array}{ll}
	\max\limits_{r, \bfw} & r \\
	\mbox{s.t.} & \sqrt{ e^{2r} - 1}  \sigma_{\bfw}  \le  \hatbeta^\top \bfw - \mu_\target \\ [1ex]
	&  e^{2r} \sigma_{\bfw}^2  \le  \sigma_\target^2 \\ [1ex]
	& \bfone_n^\top \bfw = 1, \ \ r \ge 0 \\ [1ex]
	& \bfw \in \P .
\end{array} 
\ee  
We need a numerical method.

The method largely follows the one suggested by \cite{Zhu2022joint} with one modification. 
It is noted that when $r$ is fixed, JERO-P model (\ref{JERO-P}) is a convex problem. 
However, one potential issue is that JERO-P model (\ref{JERO-P}) may not be feasible. 
That is, with the constraint $\bfw \in \P$, the two robust constraints may not be satisfied simultaneously.
We make the following modification. For a fixed $r$, let
\[
   \F_r := \left\{
    \bfw \ \Big| \ e^{r} \sigma_{\bfw}  \le  \sigma_\target , \ \ \bfone_n^\top \bfw = 1, \ \ \bfw \in \P
   \right\}
\]
and
\bes
  f_r(\bfw) := \left\{
  \begin{array}{ll}
       \sqrt{ e^{2r} - 1}  \sigma_{\bfw}  -  (\hatbeta^\top \bfw - \mu_\target) , \ &  \mbox{if} \ \bfw \in \F_r \\
       \infty  , \ & \mbox{otherwise}.
  \end{array}
  \right . 
\ees
In this setting, we treat the robust mean constraint as an objective over the feasible region of JERO-P model (\ref{JERO-P}) excluding the robust return constraint. 
Consequently, we solve the following JERO-Pr problem:
\be \label{JERO-Pr}
	\min_{\bfw} f_r(\bfw) \quad \mbox{s.t.} \quad \bfw \in \F_r.  
\ee
Let $f_r^*$ be the optimal objective value of JERO-Pr model (\ref{JERO-Pr}). It is easy to see that if $f_r^* >0$, then the robust return constraint is never satisfied whenever $\bfw \in \F_r$.
This implies that the fixed $r$ is too big. We need to reduce it and solve the new problem with the new $r$. 
This can be done by a bisection method on $r$.

Let us briefly explain why JERO-Pr model (\ref{JERO-Pr}) is convex and its final form.
As is a well-known technique \citep{ben2009robust}, the norm constraint can be reformulated as a second-order cone constraint by
\[
 \| \hatC^{1/2} \bfw \| \le t 
 \quad \Longleftrightarrow \quad
 \left\{
 \begin{array}{l}
 	\| \bfz \| \le t \\ [1ex]
 	\hatC^{1/2} \bfw  = \bfz .
 \end{array} 
 \right .
\]
For any given $(t, \bfw)$ define
\bes
  f_r(t, \bfw) := \left\{
  \begin{array}{ll}
       \sqrt{ e^{2r} - 1}  t  -  (\hatbeta^\top \bfw - \mu_\target) , \ & \mbox{if} \ \bfw \in \F_r \\
       \infty  , \ & \mbox{otherwise}.
  \end{array}
  \right . 
\ees
Then JERO-Pr model (\ref{JERO-Pr}) is equivalent to the following JERO-SOCP:
\be \label{JERO-SOCP}
\begin{array}{ll}
	\min\limits_{t, \bfw, \bfz} & f_r(t, \bfw) \\
	\mbox{s.t.} 
	&  e^{r} t  \le  \sigma_\target, \\[1ex]
    &  \| \bfz \| \le t, \ \hatC^{1/2} \bfw  = \bfz, \\[1ex]
	& \bfone_n^\top \bfw = 1, \ \ \bfw \in \P.
\end{array} 
\ee  
This is a second-order cone programming (SOCP) problem for fixed $r$ and hence it is convex. The bisection algorithm is stated as follows.
\begin{algorithm}[!h]
\caption{Bisection (BS) algorithm}\label{<alg-BS>}
\begin{algorithmic}[1]
  \item[1.] Initialization: ${\bf w}_0 \in \mathbb{R}^n$, $r_1 = 0$, $r_2 \in \mathbb{R}_+$ and the tolerance level $\delta$ (e.g., $10^{-5}$). Set $k:=0$.
  \item[2.] If $r_2 - r_1 \leq \delta$, stop; otherwise, go to step 3.
  \item[3.] Let $r:= (r_1 + r_2)/2$. Solve JERO-SOCP model (\ref{JERO-SOCP}) and obtain the optimal solution ${\bf w}  $.
  \item[4.] If $f_r(t,\bfw) \leq 0$, update $r_1:= r$; otherwise, update $r_2:= r$.
  \item[5.] Go to step 2.
\end{algorithmic}
\end{algorithm}

\section{Numerical Results} \label{Section-NE}

This part will study the numerical performance of JERO-MV model (\ref{JERO-MV}) and its variant when more constraints such as long-only constraint are added, in comparison with some existing robust models. 

\subsection{Location of JERO-MV portfolios}\label{subNE1}
 {\bf (a) Data sets.} We choose two real data sets for our numerical illustration. The first data set consists of 447 stocks that have appeared in the Standard \& Poor's 500 Index (S\&P500). S\&P500 is considered the most representative U.S. stock market index. Its changes draw the attention of global investors and profoundly impact global stock markets and investment strategies. The second data set is from \cite{maillard2010the}. It shows a large heterogeneity in volatilities and similarities of correlation coefficients around low levels (0\% - 10\%). In addition, this data is more suitable for constructing the numerical experiments of the case $\mu_\target < \mu_\mvp$, because its standard deviation is relatively low. We describe the two data sets below.
 
\begin{example}
    (S$\&$P500 stocks) The data set consists of 447 stocks and the data period is from January 2, 2019 to December 29, 2023. The mean and covariance matrix of the daily returns were annualized in this experiment (see \cite{qi2023geometric} for details).
\end{example}

\begin{example}
    (Agricultural assets) This data set consists of 8 light agricultural commodities taken from \cite{maillard2010the} and is given in Table \ref{Tab-agri}.
\end{example}
     
\begin{table}[!h]
\centering
\caption{Description of the agricultural assets data set}\label{Tab-agri}%
\begin{tabular}{@{}llcllllllll@{}}
\hline
Code & Return & $\sigma$ & \multicolumn{8}{c}{Correlation matrix (\%)}\\
\hline
CC  & 4.5\%   & 21.4\%  & 100 & 2.7  & 4.2   & 61.8 & 51.6 & 13.9 & 4.6  & 9.3  \\
CLC & 17.2\%  & 14.8\%  & ~   & 100  & 31.0  & 4.5  & 3.5  & 2.5  & 0.8  & 3.7  \\
CLH & 14.4\%  & 22.6\%  & ~   & ~    & 100   & 7.0  & 5.9  & 5.0  & -0.7 & 3.1  \\
CS  & 10.5\%  & 21.8\%  & ~   & ~    & ~     & 100  & 42.8 & 16.2 & 6.3  & 10.4 \\
CW  & 5.1\%   & 23.7\%  & ~   & ~    & ~     & ~    & 100  & 10.9 & 5.6  & 7.9  \\
NCT & 3.6\%   & 23.2\%  & ~   & ~    & ~     & ~    & ~    & 100  & 3.4  & 7.3  \\
NKC & 4.2\%   & 36.5\%  & ~   & ~    & ~     & ~    & ~    & ~    & 100  & 6.6  \\
NSB & 5.0\%   & 43.8\%  & ~   & ~    & ~     & ~    & ~    & ~    & ~    & 100  \\
\hline
\end{tabular}
\end{table}

{\bf (b) Numerical results.} In this experiment, we study the relationship between JERO-MV portfolio and efficient frontier. Fig. \ref{Fig-JEROMVvsMV} shows the results on S$\&$P500 stocks and Agricultural assets. 
The black curve represents the effective frontier, consists of efficient portfolios ${\bf w}_{\ef}$ with standard deviation $\sigma_{\ef}$ and expected return $\mu_{\ef}$. MVP is the global minimum variance portfolio. 
For S$\&$P500 data set, set $\mu_\target = c_1 \mu_{\ef}$ and $\sigma_\target = (1+c_2) \sigma_{\ef}$. 
Set $c_1 = 1, 1.01,1.02,\cdots,1.1$ and $c_2 = 5\%$. 
We can get the portfolio ${\bf w}_{\rmv}$ by solving JERO-MV model (\ref{JERO-MV}). 
We select five effective portfolios ${\bf w}_{\ef}$, where $\sigma_\ef = (0.0776, 0.0831, 0.0932, 0.1066, 0.1221)$ and $\mu_\ef = (0.1502, 0.2879, 0.4255, 0.5632, 0.7008)$. For each $\bfw_\ef$, there are eleven JERO-MV portfolios $\bfw_\rmv$. we circle $\bfw_\ef$ with the corresponding $\bfw_\rmv$ by a green ellipse.
For Agricultural assets, we construct the case $\mu_\target < \mu_\mvp$. 
Specifically, we set $\mu_\target = r_0 = c_1 \mu_\mvp$ and $\sigma_\target = (1+c_2) \sigma_{\ef}$. Given $c_1 = 0.7$ and $c_2 = 0.3$, we also select five effective portfolios ${\bf w}_{\ef}$, where $\sigma_\ef = (0.1062, 0.1298, 0.1675, 0.2118, 0.2594)$ and $\mu_\ef = (0.1287, 0.1651, 0.2014, 0.2378, 0.2742)$. For each $\bfw_\ef$, there is one $\bfw_\rmv$. We link $\bfw_\ef$ with its corresponding $\bfw_\rmv$.
\begin{figure}[h!]
	\centering
     \begin{subfigure}{0.45\linewidth}
		\includegraphics[width= 1\textwidth]{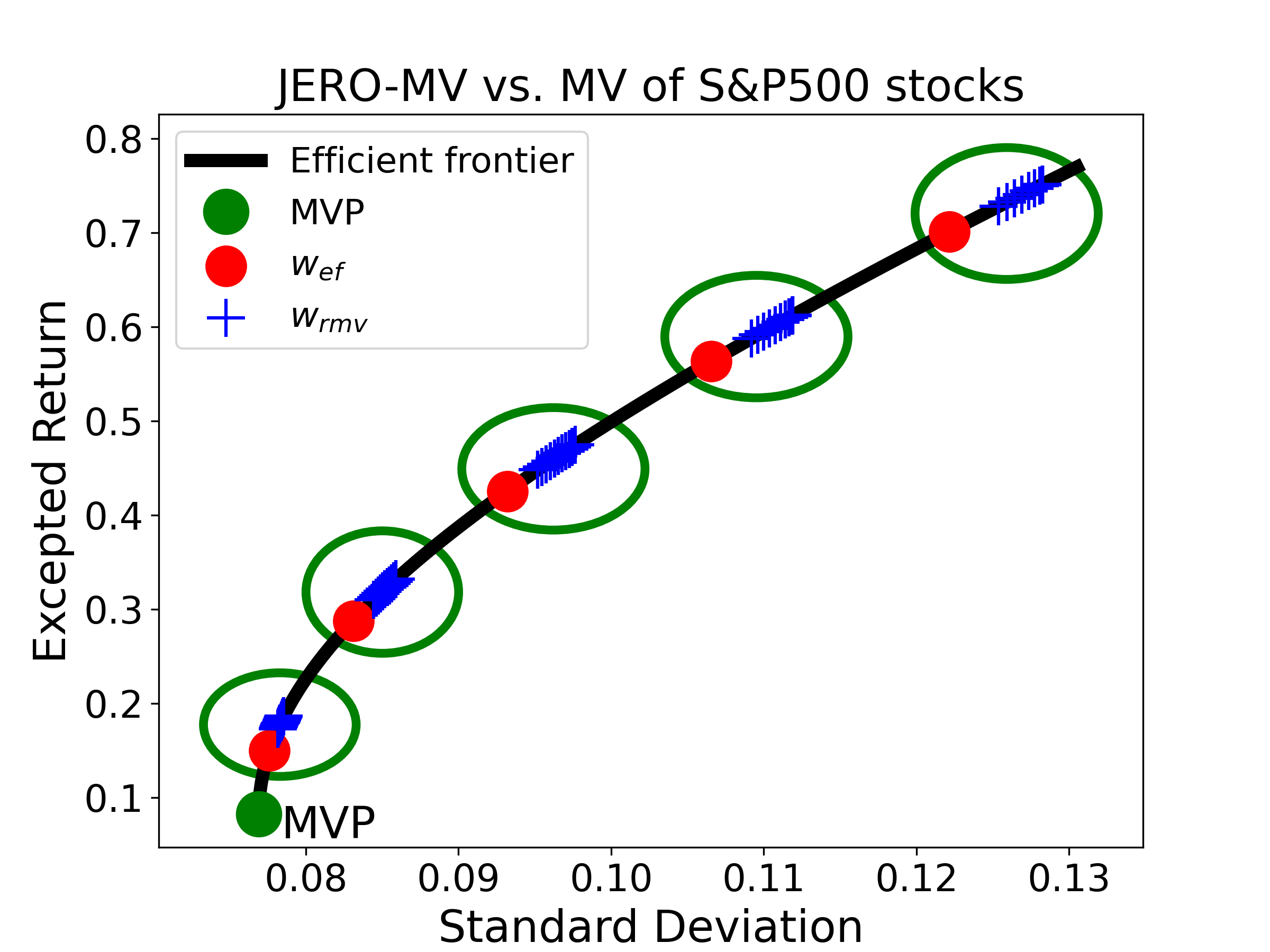}
		\subcaption{}
	 \end{subfigure}
     \begin{subfigure}{0.45\linewidth}
		\includegraphics[width= 1\textwidth]{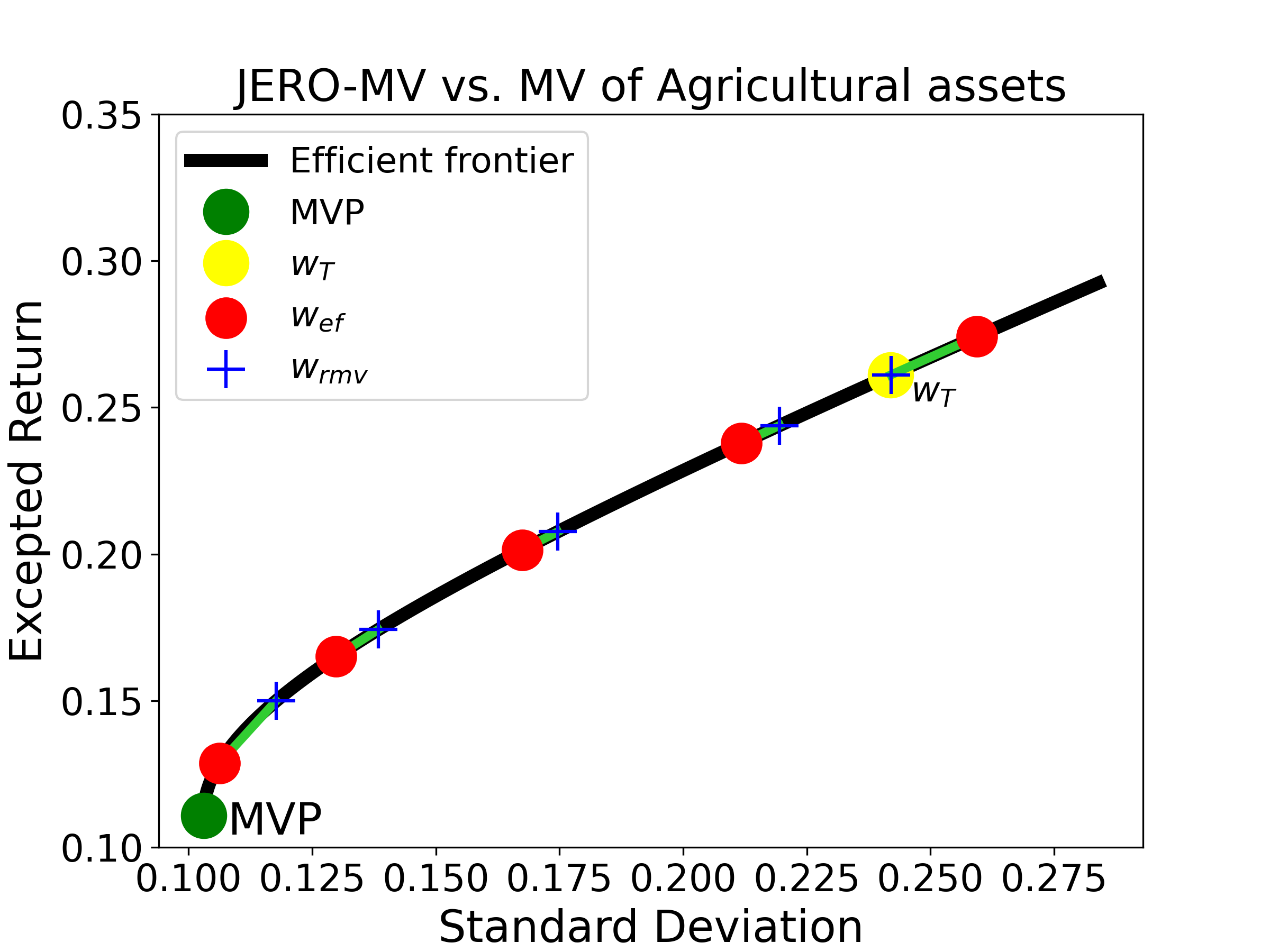}
		\subcaption{}
	 \end{subfigure}
  \caption{Comparison between JERO-MV and Efficient frontier. (a) demonstrates the results of S$\&$P500 stocks. We circle the MV portfolio $\bfw_\ef$ with the corresponding JERO-MV portfolio $\bfw_\rmv$ by a green ellipse. (b) demonstrates the results of Agricultural assets. $\bfw_T$ is defined as (\ref{Eq-wT}). We link the MV portfolio $\bfw_\ef$ with the corresponding JERO-MV portfolio $\bfw_\rmv$ by a green line.}
  \label{Fig-JEROMVvsMV}
\end{figure}

We also investigate the relationship between JERO-MV model's variant and efficient frontier. Specifically, we consider the JERO-MV model with long-only constraint (JERO-MVL). 
The design of the numerical experiment is similar to the above. 
Fig. \ref{Fig-JEROMVLvsMVL} shows the results on S$\&$P500 stocks and Agricultural assets. 
The black curve represents the effective frontier of MV model with long-only constraint (MVL), consists of efficient long-only portfolios ${\bf w}_{\efl}$ with standard deviation $\sigma_{\efl}$ and expected return $\mu_{\efl}$. 
Set $\mu_{\target} = c_1 \mu_{\efl}$ and $\sigma_{\target} = (1+c_2) \sigma_{\efl}$, we can get the portfolio ${\bf w}_{\rmvl}$ by solving JERO-P model (\ref{JERO-P}) with $\mathcal{P} = \mathbb{R}^n_+$. 
In this experiment, we set $c_1 = 0.98, 0.99, \cdots, 1.1$ and $c_2 = 0.05$ such that we can observe the result of $\mu_\target < \mu_\mvp$. We select five effective portfolios ${\bf w}_{\efl}$, where $\sigma_\efl = (0.0776, 0.0831, 0.0932, 0.1066, 0.1221)$ and $\mu_\efl = (0.1502, 0.2879, 0.4255, 0.5632, 0.7008)$. For each $\bfw_\efl$, there are thirteen JERO-MVL portfolios $\bfw_\rmvl$. We circle the MV portfolio $\bfw_\ef$ with the corresponding JERO-MV portfolio $\bfw_\rmv$ by a green ellipse in Fig. \ref{Fig-JEROMVLvsMVL}(a) and link the MV portfolio $\bfw_\ef$ with the corresponding JERO-MV portfolio $\bfw_\rmv$ by a green line in Fig. \ref{Fig-JEROMVLvsMVL}(b).

\begin{figure}[h!]
	\centering
     \begin{subfigure}{0.45\linewidth}
		\includegraphics[width= 1\textwidth]{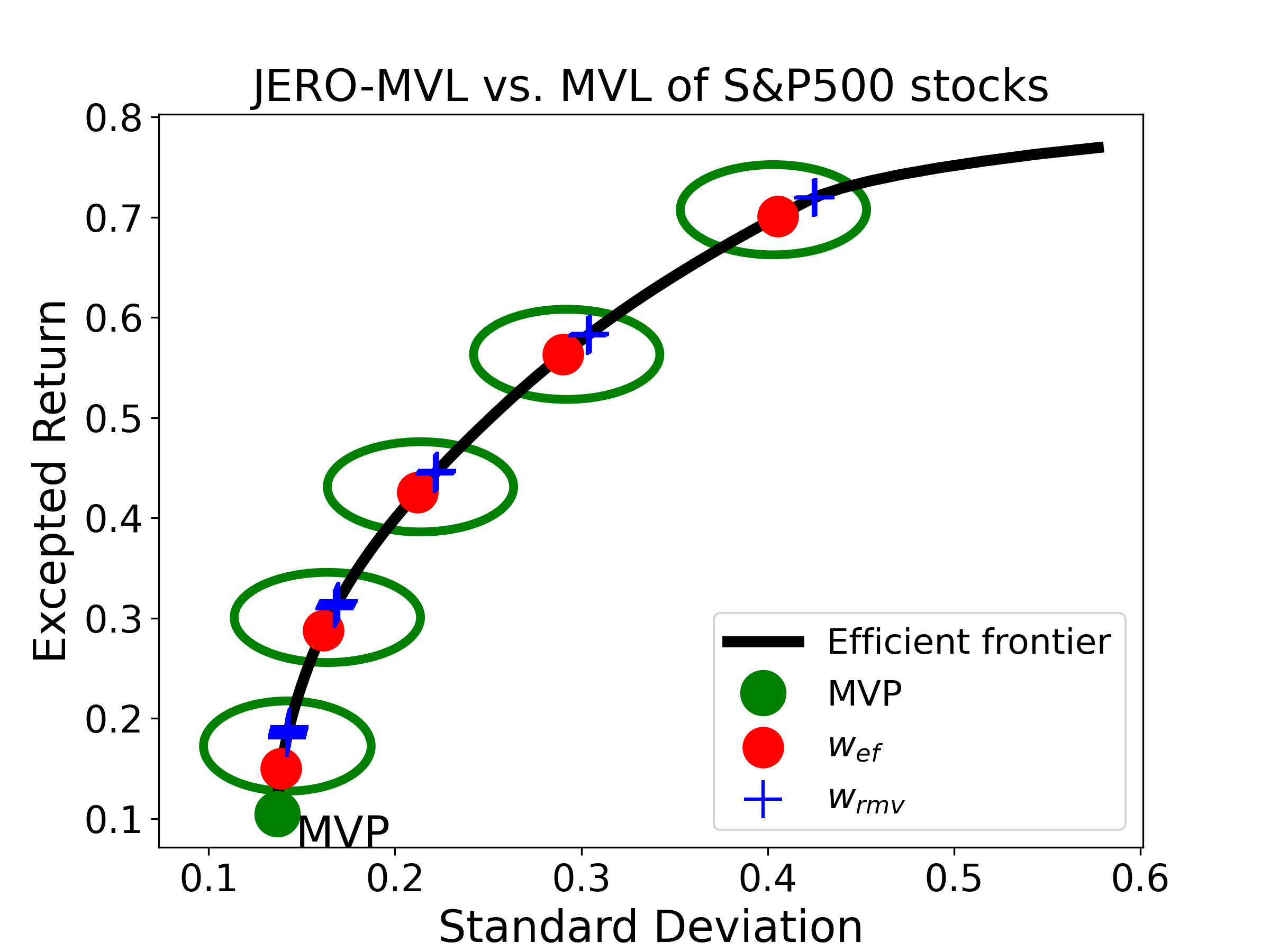}
		\subcaption{}
	 \end{subfigure}
     \begin{subfigure}{0.45\linewidth}
		\includegraphics[width= 1\textwidth]{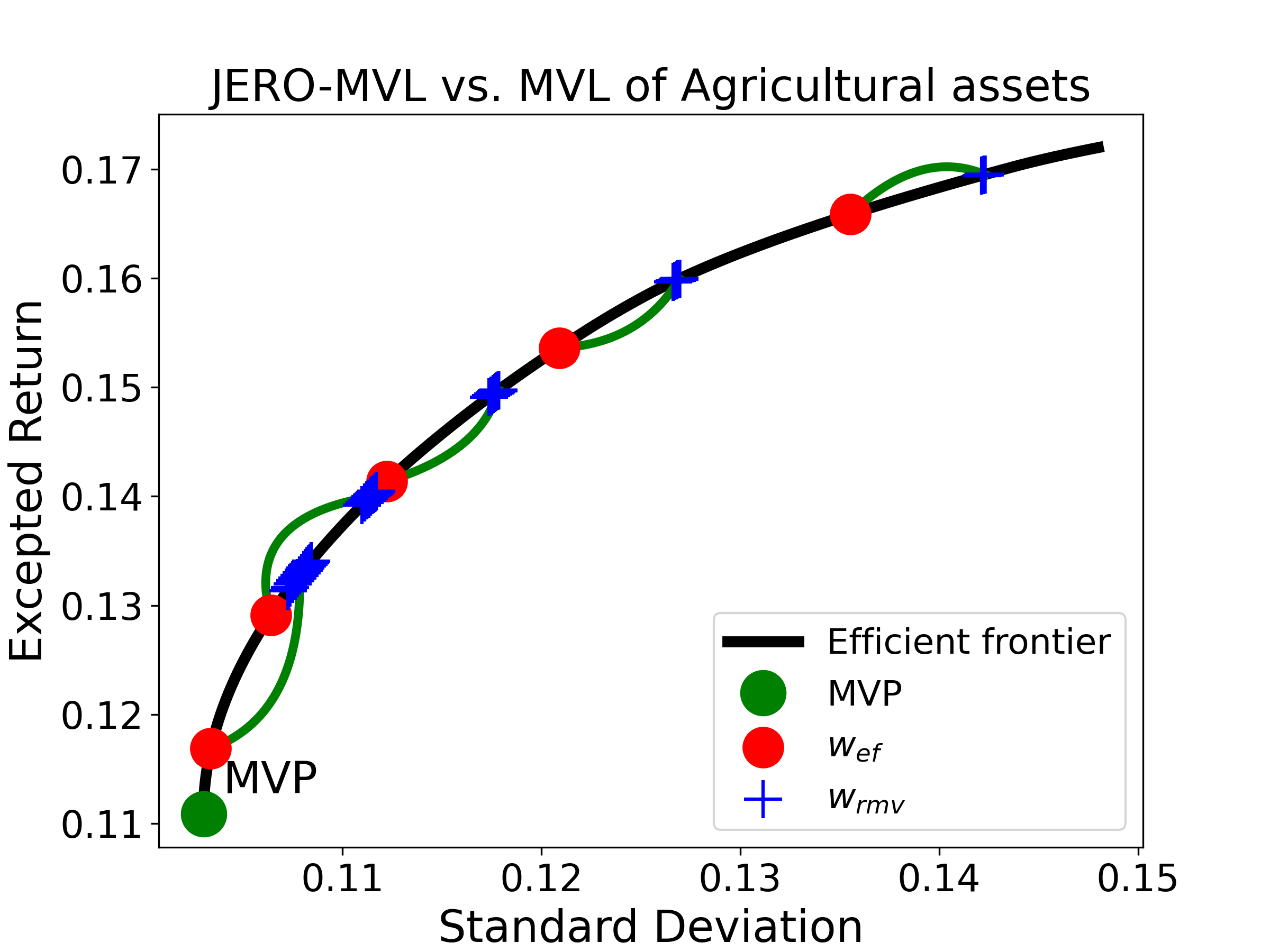}
		\subcaption{}
	 \end{subfigure}
  \caption{Comparison between JERO-MVL and Efficient frontier. (a) demonstrates the results of S$\&$P500 stocks. We circle the MVL portfolio $\bfw_\efl$ with the corresponding JERO-MVL portfolio $\bfw_\rmvl$ by a green ellipse. (b) demonstrates the results of Agricultural assets. The green curves are used to link $\bfw_\efl$ with the corresponding $\bfw_\rmvl$.}
  \label{Fig-JEROMVLvsMVL}
\end{figure}

\vspace{\baselineskip}
{\bf (c)  Key observations.} We have the following observations.

\begin{itemize}
\item[1.] {\bf $\bfw_\rmv$ is in a neighborhood of the corresponding $\bfw_\ef$ and has the higher return and Sharpe ratio.} 
In Fig. \ref{Fig-JEROMVvsMV}, all $\bfw_\rmv$ are on the efficient frontier curves.
From Fig. \ref{Fig-JEROMVvsMV}(a), we found that all $\bfw_\rmv$ are in a small neighborhood above the corresponding $\bfw_\ef$ along the efficient frontier curve. We report the returns, risks and Sharpe ratios with $c_1 = 1$ and $c_2 = 0.05$ in Table \ref{Tab-perform-MV}. For $c_1 = 1.01,\cdots,1.1$, the return and Sharpe ratio of $\bfw_\rmv$ will be larger. In the five tests, the mean return of $\bfw_\rmv$ is $2.38\%$ higher than that of $\bfw_\ef$. Furthermore, the Sharpe ratios of $\bfw_\rmv$ are larger than that of the corresponding $\bfw_\ef$. 
In Fig. \ref{Fig-JEROMVvsMV}(b), we study the case $\mu_\target < \mu_\mvp$. When $\sigma_\target \in (\sigma_l, \sigma_u) = (0.1083, 0.3037)$, all $\bfw_\rmv$ are above the corresponding $\bfw_\ef$ along the efficient frontier curve. From Table \ref{Tab-perform-MV}, the mean return of $\bfw_\rmv$ is $0.59\%$ higher than that of $\bfw_\ef$. When $\sigma_\target > \sigma_u$, $\bfw_\rmv$ is $\bfw_T$, defined as (\ref{Eq-wT}). In both cases, JERO-MV portfolio has a larger Sharpe ratio.
For investors with a relatively low return target (i.e. $\mu_\target < \mu_\mvp$), they can get a portfolio with the higher return and Sharpe ratio by appropriately increasing the risk target.

\item[2.] {\bf $\bfw_\rmvl$ is efficient and in a neighborhood above the corresponding $\bfw_\efl$ along the efficient frontier curve.} 
In Fig. \ref{Fig-JEROMVLvsMVL}, all $\bfw_\rmvl$ are on the efficient frontier curves. 
This is related to our bisection algorithm.
In fact, maximizing $r$ such that $f_r(t,\bfw)\leq 0$, is equivalent to maximizing the Sharpe ratio over the feasible region $\F_r$. 
Denote the optimal $r$ as $r_\ast$. 
When $r=r_\ast$, we obtain a maximum Sharpe ratio portfolio $\bfw_\ast$ among all $\bfw \in \F_{r}$, which is therefore an efficient portfolio.
From Fig. \ref{Fig-JEROMVLvsMVL}(a), we found that all $\bfw_\rmvl$ are above the corresponding $\bfw_\efl$ along the curve, even when we set $\mu_\target < \mu_\efl$. We report the returns, risks and Sharpe ratios with $c_1 = 1$ and $c_2 = 0.05$ in Table \ref{Tab-perform-MVL}. The mean return of $\bfw_\rmvl$ is $2.30\%$ higher than that of $\bfw_\efl$.   
In Fig. \ref{Fig-JEROMVLvsMVL}(b), the performance of $\bfw_\rmvl$ is similar to that of S$\&$P500 stocks. 
From Table \ref{Tab-perform-MVL}, the mean difference of returns is $0.87\%$. 
With the increase of $\mu_\target$ and $\sigma_\target$, Sharpe ratio first increases and then decreases. Therefore, investors should set reasonable return and risk targets according to their investment style. 

\end{itemize}

\begin{table}[!h]
\centering
\caption{Returns, risks and Sharpe ratios of JERO-MV and MV portfolios}\label{Tab-perform-MV}%
\begin{tabular}{@{}lll@{}}
\hline
~ & ~~~~~~~~~~~~$\bfw_\ef$ & ~~~~~~~~~~~~$\bfw_\rmv$ \\
\hline
\multicolumn{3}{l}{\textbf{S$\&$P500 stocks}} \\
\hline
Return       & $(0.1502, 0.2879, 0.4255, 0.5632, 0.7008)$ & $(0.1734, 0.3099, 0.4481, 0.5875, 0.7277)$ \\
Risk         & $(0.0776, 0.0831, 0.0932, 0.1066, 0.1221)$ & $(0.0782, 0.0845, 0.0952, 0.1092, 0.1254)$  \\
Sharpe ratio & $(1.5490, 3.1035, 4.2436, 5.0019, 5.4939)$ & $(1.8338, 3.3124, 4.3918, 5.1053, 5.5638)$ \\
\hline
\multicolumn{3}{l}{\textbf{Agricultural assets}} \\
\hline
Return       & $(0.1287, 0.1651, 0.2014, 0.2378, 0.2742)$ & $(0.1499, 0.1742, 0.2076, 0.2438, 0.2611)$ \\
Risk         & $(0.1062, 0.1298, 0.1675, 0.2118, 0.2594)$ & $(0.1177, 0.1384, 0.1746, 0.2195, 0.2420)$  \\
Sharpe ratio & $(0.4811, 0.6741, 0.7391, 0.7564, 0.7579)$ & $(0.6142, 0.6980, 0.7446, 0.7572, 0.7582)$ \\
\hline
\end{tabular}
\end{table}

\begin{table}[!h]
\centering
\caption{Returns, risks and Sharpe ratios of JERO-MVL and MVL portfolios}\label{Tab-perform-MVL}%
\begin{tabular}{@{}lll@{}}
\hline
 ~ & ~~~~~~~~~~~~$\bfw_\efl$ & ~~~~~~~~~~~~$\bfw_\rmvl$ \\
\hline
\multicolumn{3}{l}{\textbf{S$\&$P500 stocks}} \\
\hline
Return       & $(0.1502, 0.2879, 0.4255, 0.5632, 0.7008)$ & $(0.1826, 0.3116, 0.4453, 0.5831, 0.7199)$ \\
Risk         & $(0.1389, 0.1616, 0.2121, 0.2902, 0.4054)$ & $(0.1422, 0.1680, 0.2218, 0.3040, 0.4252)$ \\
Sharpe ratio & $(0.8654, 1.5960, 1.8647, 1.8374, 1.6547)$ & $(1.0731, 1.6762, 1.8724, 1.8194, 1.6225)$ \\
\hline
\multicolumn{3}{l}{\textbf{Agricultural assets}} \\
\hline
Return       & $(0.1169, 0.1291, 0.1414, 0.1536, 0.1659)$ & $(0.1320, 0.1396, 0.1494, 0.1598, 0.1695)$ \\
Risk         & $(0.1034, 0.1064, 0.1122, 0.1209, 0.1355)$ & $(0.1075, 0.1112, 0.1176, 0.1268, 0.1423)$ \\
Sharpe ratio & $(0.3859, 0.4897, 0.5739, 0.6336, 0.6560)$ & $(0.5116, 0.5629, 0.6156, 0.6530, 0.6500)$ \\
\hline
\end{tabular}
\end{table}

\subsection{Comparison with some classical strategies}\label{subNE2}

{\bf (A) Data sets.} The main purpose of this subsection is to evaluate the out-of-sample performance of JERO-MV model (\ref{JERO-MV}). 
Out-of-sample performance, which has been used in almost all portfolio literature, can reflect the performance level of the model in practical application. 
The data sets above are not enough for this purpose. 
Hence, we choose more data sets of different stock markets and include stocks of large, medium and small companies. Specifically, we consider DAX Index (DAX), Hangseng Index (HS), NASDAQ 100 Index (NDX100), Standard \& Poor's 500 Index (S\&P500), Russell 1000 Index (Russell1000), and Russell 2000 Index (Russell2000). 
We use the weekly return rate, which can not only capture the mid-term market dynamics, but also avoid excessive noise. 
The data period from January 7, 2019 to December 31, 2023. 
The properties of market indices are given in Table \ref{Tab-dataset}: ID, name (Dataset), the number of assets (N), the data period (Period), data sources (Source), and frequency (Frequency). 

\begin{table}[!h]
\centering
\caption{Description of the data sets}\label{Tab-dataset}%
\begin{tabular}{@{}llllll@{}}
\hline
ID & Dataset & $N$ & Period & Source & Frequency\\
\hline
   1 & DAX          & 27   & 07/01/2019-31/12/2023 & Yahoo finance & Weekly\\
   2 & HS           & 67   & 07/01/2019-31/12/2023 & Yahoo finance & Weekly\\
   3 & NDX100       & 90   & 07/01/2019-31/12/2023 & Yahoo finance & Weekly\\
   4 & S\&P500      & 485  & 07/01/2019-31/12/2023 & Yahoo finance & Weekly\\
   5 & Russell1000  & 859  & 07/01/2019-31/12/2023 & Yahoo finance & Weekly\\
   6 & Russell2000  & 1214 & 07/01/2019-31/12/2023 & Yahoo finance & Weekly\\
\hline
\end{tabular}
\end{table}

{\bf (B) Evaluation methodology.} We use the rolling window method to design the experiments. We collect time series data sets of weekly return rates for $T$ weeks. 
Denote the length of the estimation window as $M < T$.
Starting at $t = M + 1$, we use the data in the previous $M$ weeks to estimate the optimal weights for the $t$-th week. 
These weights are then used to compute the return rate in the $t$-th week. We continue doing this until $t = T$. 
At the end of this progress, we obtain $Q = T - M$ out-of-sample returns, $r_1,\cdots,r_q$, for each strategy and data set considered.

The following measures are used to evaluate the out-of-sample performances.

1. The out-of-sample mean returns (MR), which is defined as follows:
\begin{equation}
 \begin{aligned}
  \text{MR} = \frac{1}{Q} \sum\limits_{q=1}^Qr_q.
 \end{aligned}
\end{equation}

2. The out-of-sample standard deviation (SD) of the portfolio:
\begin{equation}
 \begin{aligned}
  \text{SD} = \sqrt{\frac{1}{Q-1}\sum_{q=1}^Q(r_q-MR)^2},
 \end{aligned}
\end{equation}
which is used to measure the risk of portfolio.

3. The out-of-sample Sharpe ratio (SR) is the sample mean of out-of-sample excess returns divided by their standard deviation:
\begin{equation}
 \begin{aligned}
  \text{SR}=\frac{\text{MR} - r_f}{\text{SD}},
 \end{aligned}
\end{equation}
where $r_f$ is a risk-free rate.

4. The stability index (SI) is the average sum of the absolute value of the trades across the different assets:
\begin{equation}
 \begin{aligned}
  \text{SI} = \frac{1}{n(Q-1)} \sum\limits_{q=1}^{Q-1}\sum\limits_{i=1}^{n}(\vert w_i^{q+1}-w_i^q \vert),
 \end{aligned}
\end{equation}
where $w_i^{q+1}$ is the optimal weight in the $(q+1)$-th iteration for the $i$-th asset and $w_i^q$ is the weight of the strategy on the $i$-th asset in the $q$-th iteration. The smaller SI, the better performance.

{\bf (C) Evaluation strategies.} In order to evaluate the out-of-sample performance of our model, we compare it with some commonly used models, including deterministic optimization models and robust optimization models.
In deterministic optimization models, we consider the two basic models: the mean-variance (MV) model \citep{markowitz1952, best2000the} and the global minimum variance (GMV) model \citep{Chopra1993The, jagannathan2003risk, behr2013on}. The well-known MV model has been widely accepted by researchers and practitioners \citep{Luisa2021An}. However, MV model is very sensitive to input errors. It is well known that estimating the mean is more challenging than estimating the covariance of asset returns \citep{Merton1980On}. Hence, GMV model, which rely solely on the estimation of covariance, is used to reduce the effect of input errors. 
 In robust optimization models, we consider the robust optimization model with ellipsoidal uncertainty set (RO) \citep{Xidonas2020robust, Georgantas2021robust} and JERO-M model \citep{Zhu2022joint}.
 All models in the experiments have the nonnegative constraint, which is widely used and closer to the actual operation \citep{frost1988for, jagannathan2003risk}.
 Table \ref{strategies} shows the portfolio strategies we consider in this paper.

\begin{table}[!h]
\centering
\caption{Portfolio strategies}\label{strategies}%
\begin{tabular}{@{}llll@{}}
\hline
Group & Model & Abbr. & Refer. \\
\hline
   (1) & Our model in this paper & ~  & ~\\
   ~   & ~~~~JERO with robust return and risk constraints & JERO-MV  & This paper\\
\hline
   (2) & Deterministic model & ~  & ~\\
   ~   & ~~~~Mean-variance model & MV  & \cite{Luisa2021An}\\
   ~   & ~~~~Global minimum-variance model & GMV  & \cite{jagannathan2003risk}\\
\hline
   (3) & Robust model & ~  & ~\\
   ~   & ~~~~JERO with robust return constraint & JERO-M  & \cite{Zhu2022joint}\\
   ~   & ~~~~Robust model under the ellipsoidal uncertainty set  & RO  & \cite{Georgantas2021robust}\\
\hline
\end{tabular}
\end{table}

{\bf (D) Numerical results and observations.} In this paper, we set $T = 260,~M = 156$. The mean interest rates of 10-year government bonds in Germany, China and the United States from January 2022 to December 2023 are used as the risk-free rate $r_f$ for the portfolios in the respective markets. Specifically, for DAX and HS, $r_f$ are $1.85\%$ and $3.26\%$, respectively. For the rest data sets, we set $r_f = 3.46\%$. Setting the size $\epsilon$ of the ellipsoidal uncertainty set $\mathcal{U}$ in RO equal to the $\alpha$-th percentile $\chi^2_{\alpha,N}$ of the $\chi^2$ distribution with $N$ degrees of freedom, implies that the $\alpha\%$ of the estimated expected returns will lie inside $\mathcal{U}$. This setting was recently studied in \cite{Georgantas2021robust}. In this paper, we set $\alpha = 95$.

{\bf (D1) Out-of-sample performances: JERO-MV portfolios can trade-off the return against the risk on the premise of meeting the return and risk targets.} Among the five portfolio strategies we considered, the out-of-sample performances of the JERO-MV portfolio are at a medium level. It meets both return and risk targets while having a relatively high Sharpe ratio and low transaction costs. JERO-M and MV portfolios are more likely to get high out-of-sample mean returns and Sharpe ratios, but they have to bear relatively high risk and transaction costs. RO and GMV portfolios have the lowest risk and cost, but both the mean return and Sharpe ratio are also the lowest, even lower than the risk-free interest rate. The details of observations are as following.

In Table \ref{Tab-oos}, we show the out-of-sample performances. We can observe that JERO-M portfolios have the largest mean returns on average with $20.29\%$. And for NDX100, S$\&$P500, Russell1000 and Russell2000, the mean returns of JERO-M portfolios are the largest and significantly larger than those of other strategies. MV portfolios have the second largest mean returns on average with $17.25\%$. Except for DAX, GMV portfolios have the smallest mean returns, which are much smaller than the return targets. Although the mean returns of JERO-MV portfolios are smaller than those of JERO-M and MV portfolios, they are much larger than the mean returns of the other portfolios. Furthermore, the mean returns of almost all JERO-MV and JERO-M portfolios are larger than the return targets.  

For DAX and HS, GMV and MV portfolios have the largest Sharpe ratio, respectively. For the rest data sets, JERO-M portfolios have the largest Sharpe ratio. The Sharpe ratios of JERO-M and MV portfolios are the largest on average with $1.6857$ (JERO-M) and $1.5566$ (MV). The Sharpe ratios of JERO-MV portfolios are the third largest on average, but they are the second largest for NDX100, S$\&$P500 and Russell2000. The Sharpe ratios of RO and GMV portfolios are even negative, since their mean returns sometimes are lower than $r_f$.  

The standard deviations of the various portfolios on average are $3.13\%$ (GMV), $3.13\%$ (RO), $7.70\%$ (JERO-MV), $8.00\%$ (MV), $8.45\%$ (JERO-M). For all data sets, GMV portfolios have the smallest standard deviations, which are significantly smaller than JERO-MV, JERO-M and MV portfolios. RO and GMV portfolios have the similar standard deviations. The standard deviations of JERO-MV portfolios smaller than JERO-M and MV portfolios.

As for the stability index, GMV portfolios exhibit the smallest stability index on average with $0.0090$. The averages of stability indices of JERO-MV, JERO-M and MV are much larger than RO and GMV portfolios. However, for S$\&$P500, Russell1000 and Russell2000 data sets, which have a large number of assets, the stability indices of all strategies are similar and small, which will reduce the transaction cost of the portfolios. 

\begin{table}[!h]
  \tabcolsep=0.2cm 
  \centering
  \caption{Out-of-sample performances}\label{Tab-oos}
 \begin{tabular}{cccccccc}
\hline
   ~ & ~ & DAX & HS & NDX100 & S$\&$P500 & Russell1000 & Russell2000\\[1 ex]
   ~ & ~ & N=27 & N=67 & N=90 & N=485 & N=859 & N=1214\\
\hline
  \multirow{4}*{JERO-MV} & MR & 0.0088 & 0.1674 & 0.1662 & 0.2490 & 0.2185 & 0.1030 \\
                      ~   & SR & -0.2363 & 1.6175 & 1.5888 & 2.3242 & 2.1256 & 0.9047 \\
                      ~   & SD & 0.0412 & 0.0833 & 0.0829 & 0.0923 & 0.0865 & 0.0756 \\
                      ~   & SI & 0.0105 & 0.0021 & 0.0021 & 0.0006 & 0.0003 & 0.0002 \\
\hline
  \multirow{4}*{JERO-M}   & MR & 0.0119 & 0.1933 & 0.1933 & 0.4125 & 0.2975 & 0.1090 \\
                      ~   & SR & -0.1543 & 1.6075 & 1.7662 & 3.2045 & 2.7091 & 0.9814 \\
                      ~   & SD & 0.0429 & 0.0836 & 0.0899 & 0.1179 & 0.0970 & 0.0758 \\
                      ~   & SI & 0.0127 & 0.0019 & 0.0019 & 0.0004 & 0.0002 & 0.0002 \\
\hline
  \multirow{4}*{RO}       & MR & 0.0221 & 0.0008 & 0.0327 & 0.0121 & 0.0215 & 0.0315 \\
                      ~   & SR & 0.1143 & -1.0623 & -0.0573 & -0.7027 & -0.4269 & -0.6861 \\
                      ~   & SD & 0.0317 & 0.0299 & 0.0328 & 0.0321 & 0.0306 & 0.0308 \\
                      ~   & SI & 0.0029 & 0.0012 & 0.0011 & 0.0003 & 0.0002 & 0.0001\\
\hline
  \multirow{4}*{MV}       & MR & 0.0099 & 0.2643 & 0.1781 & 0.2488 & 0.2348 & 0.0993 \\
                      ~   & SR & -0.2054 & 2.5182 & 1.6424 & 2.2931 & 2.2322 & 0.8591 \\
                      ~   & SD & 0.0419 & 0.0920 & 0.0874 & 0.0934 & 0.0897 & 0.0753 \\
                      ~   & SI & 0.0018 & 0.0020 & 0.0030 & 0.0006 & 0.0003 & 0.0002\\
\hline
  \multirow{4}*{GMV}      & MR & 0.0242 & 0.0013 & 0.0330 & 0.0124 & 0.0218 & 0.0137 \\
                      ~   & SR & 0.1801 & -1.0448 & -0.0486 & -0.6914 & -0.4165 & -0.6714 \\
                      ~   & SD & 0.0317 & 0.0299 & 0.0326 & 0.0321 & 0.0306 & 0.0308 \\
                      ~   & SI & 0.0028 & 0.0012 & 0.0010 & 0.0003 & 0.0002 & 0.0001\\
\hline
 \end{tabular}
\end{table}

{\bf (D2) When are the robust constraints are active?} 
We define the two constrait functios respectively by
\[
  g_1(r,\bfw) := \sqrt{e^{2r}-1}\| \hatC^{1/2} \bfw \| -\hatbeta^\top \bfw - \mu_\target \quad \mbox{and} \quad 
  g_2(r,\bfw) := e^r \|  \hatC^{1/2} \bfw \| - \sigma_\target
\]
For a portfolio $\bfw$, $g_i(r,\bfw)\leq 0, \ i=1,2$ means that $(r,\bfw)$ is feasible for the problem (\ref{JERO-P}). 
Let $r_\ast$ and $\bfw^\ast$ be the optimal solution of (\ref{JERO-P}). 
Then $g_1(r_\ast,\bfw^\ast)=0$ means that the robust return constraint is active at the optimal solution and the robust risk constraint is active if $g_2(r_\ast,\bfw^\ast)=0$. When both constraints are active, we have a portfolio satisfying $r_m(\bfw^\ast) = r_v(\bfw^\ast)$.
The portfolio $\bfw^\ast$ corresponds to Point D in Fig. \ref{Fig-largeTarget} and Fig. \ref{Fig-smallTarget}. 
In our extensive tests, the robust return constraint is almost always active. The robust risk constraint is sometimes not active.

\begin{figure}[h!]
	\centering
     \begin{subfigure}{0.45\linewidth}
		\includegraphics[width= 1\textwidth]{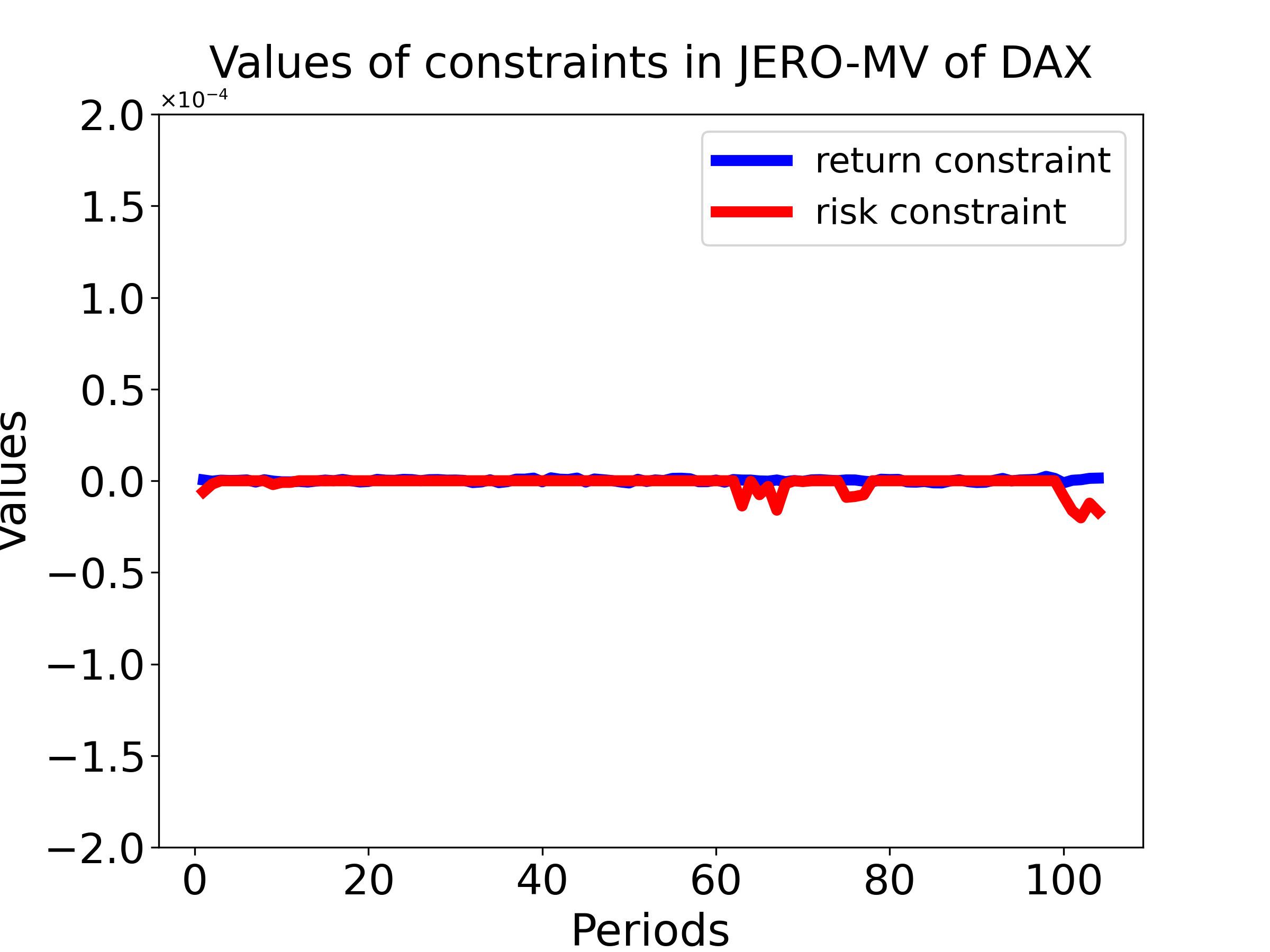}
		\subcaption{}
	 \end{subfigure}
     \begin{subfigure}{0.45\linewidth}
		\includegraphics[width= 1\textwidth]{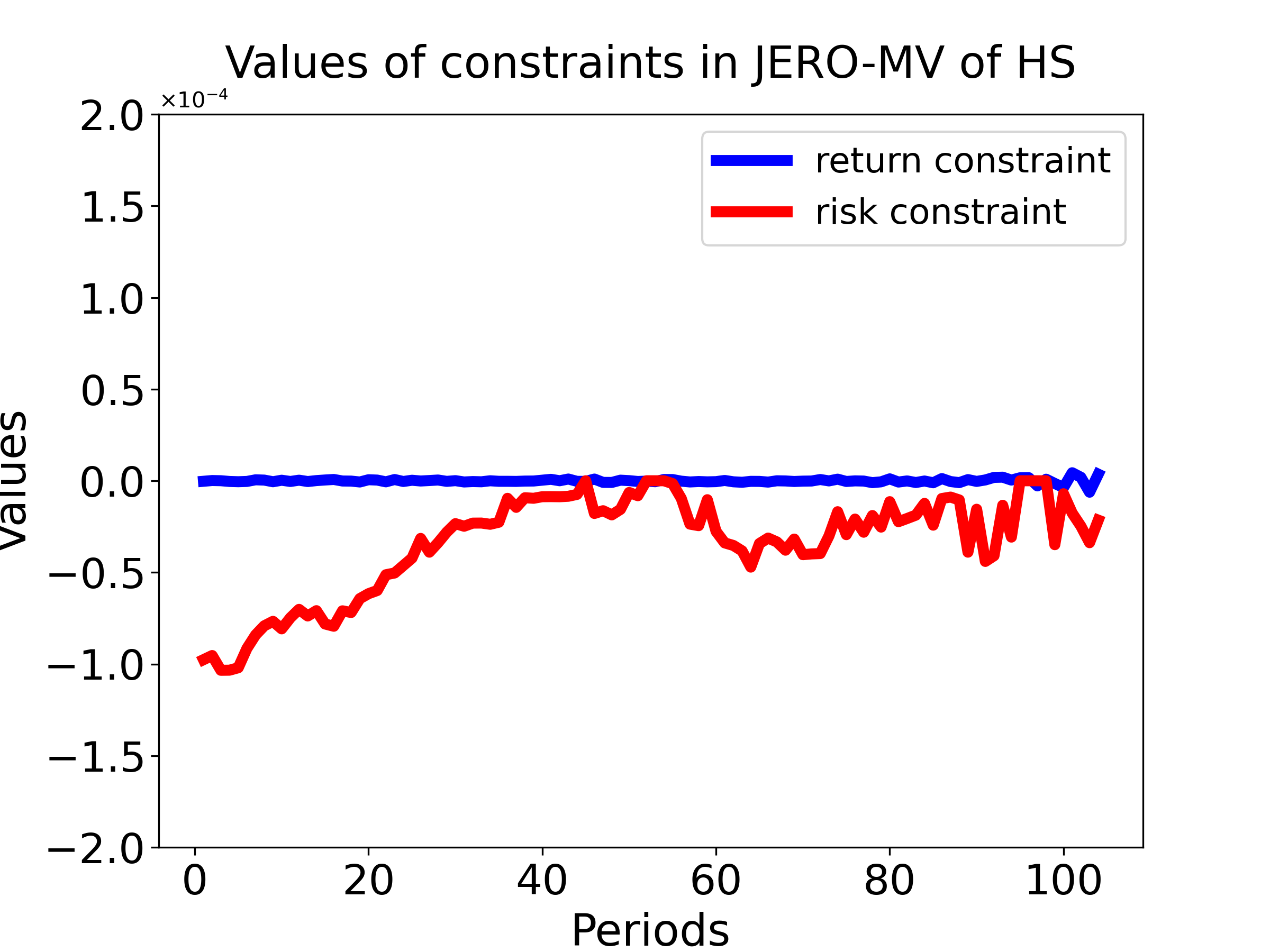}
		\subcaption{}
	 \end{subfigure}
       \begin{subfigure}{0.45\linewidth}
		\includegraphics[width= 1\textwidth]{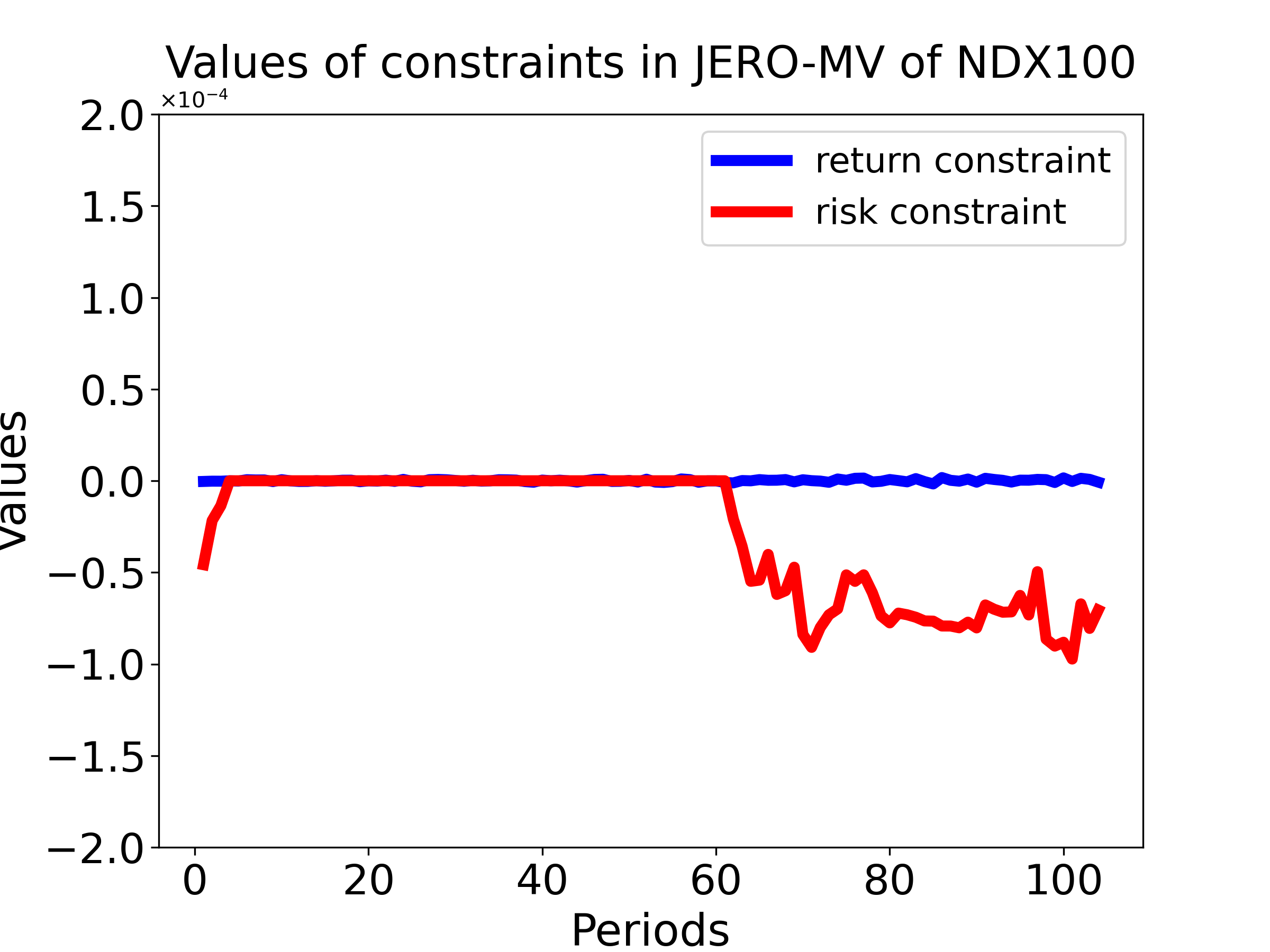}
		\subcaption{}
	 \end{subfigure}
     \begin{subfigure}{0.45\linewidth}
		\includegraphics[width= 1\textwidth]{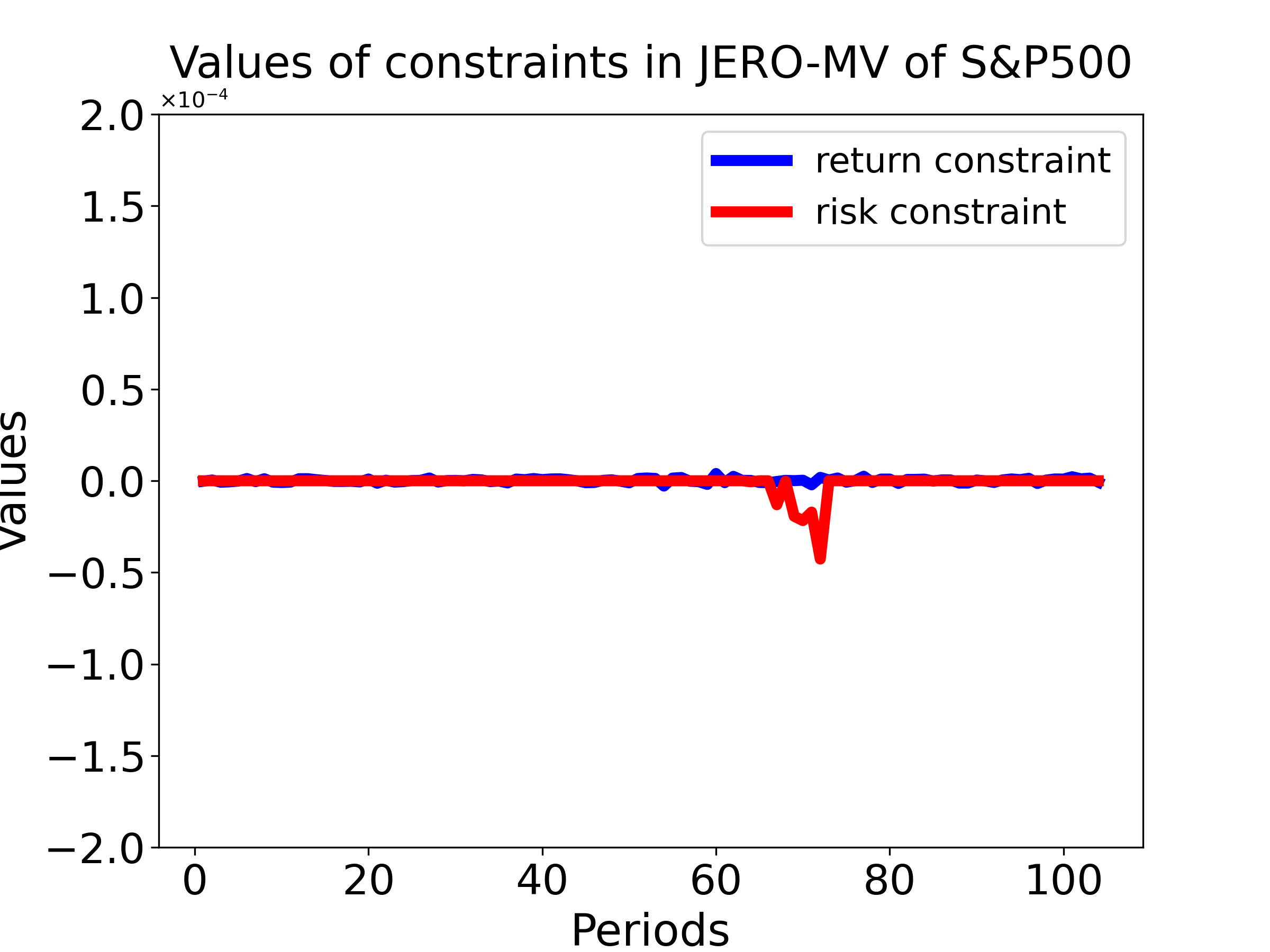}
		\subcaption{}
	 \end{subfigure}
       \begin{subfigure}{0.45\linewidth}
		\includegraphics[width= 1\textwidth]{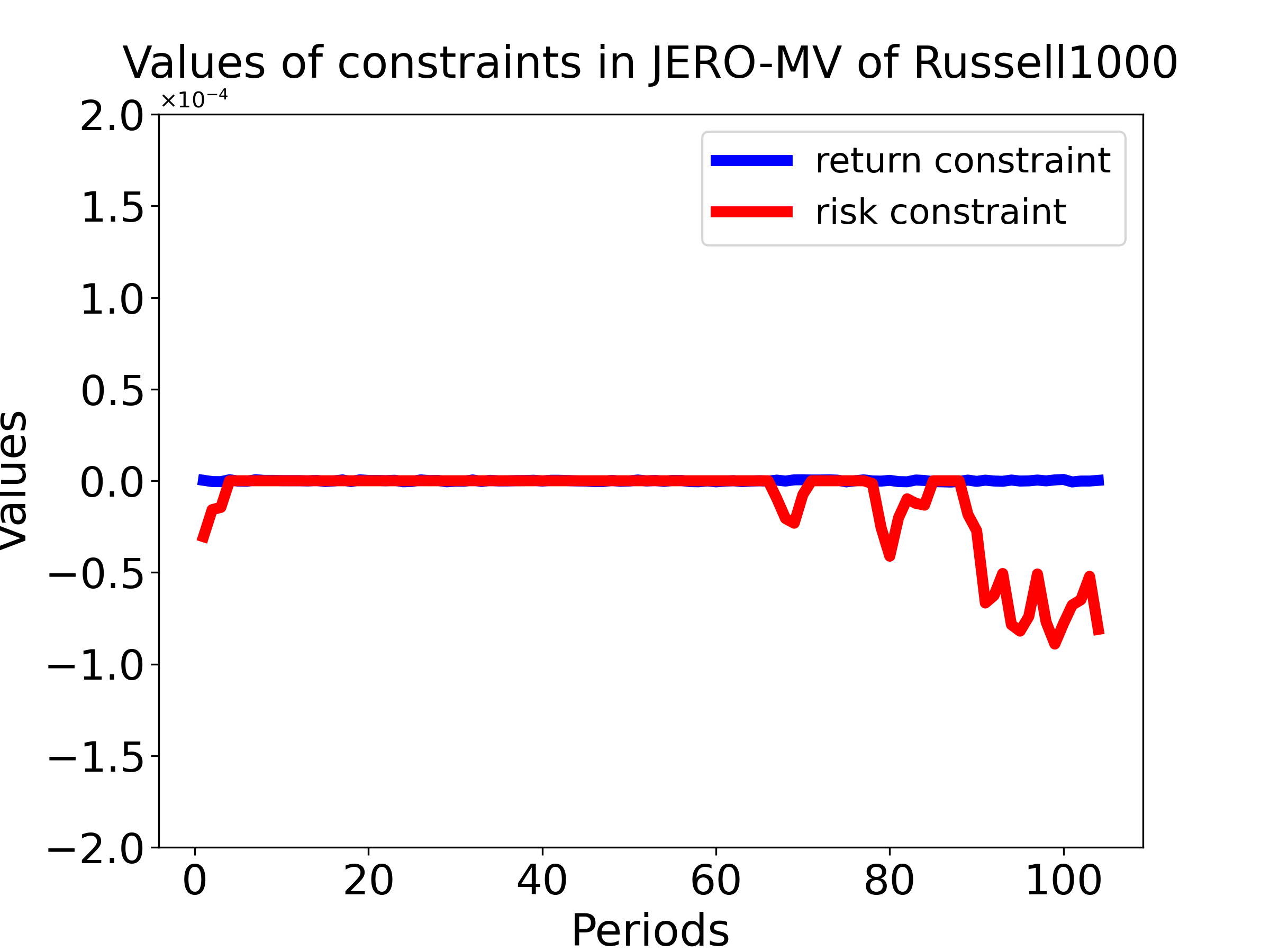}
		\subcaption{}
	 \end{subfigure}
     \begin{subfigure}{0.45\linewidth}
		\includegraphics[width= 1\textwidth]{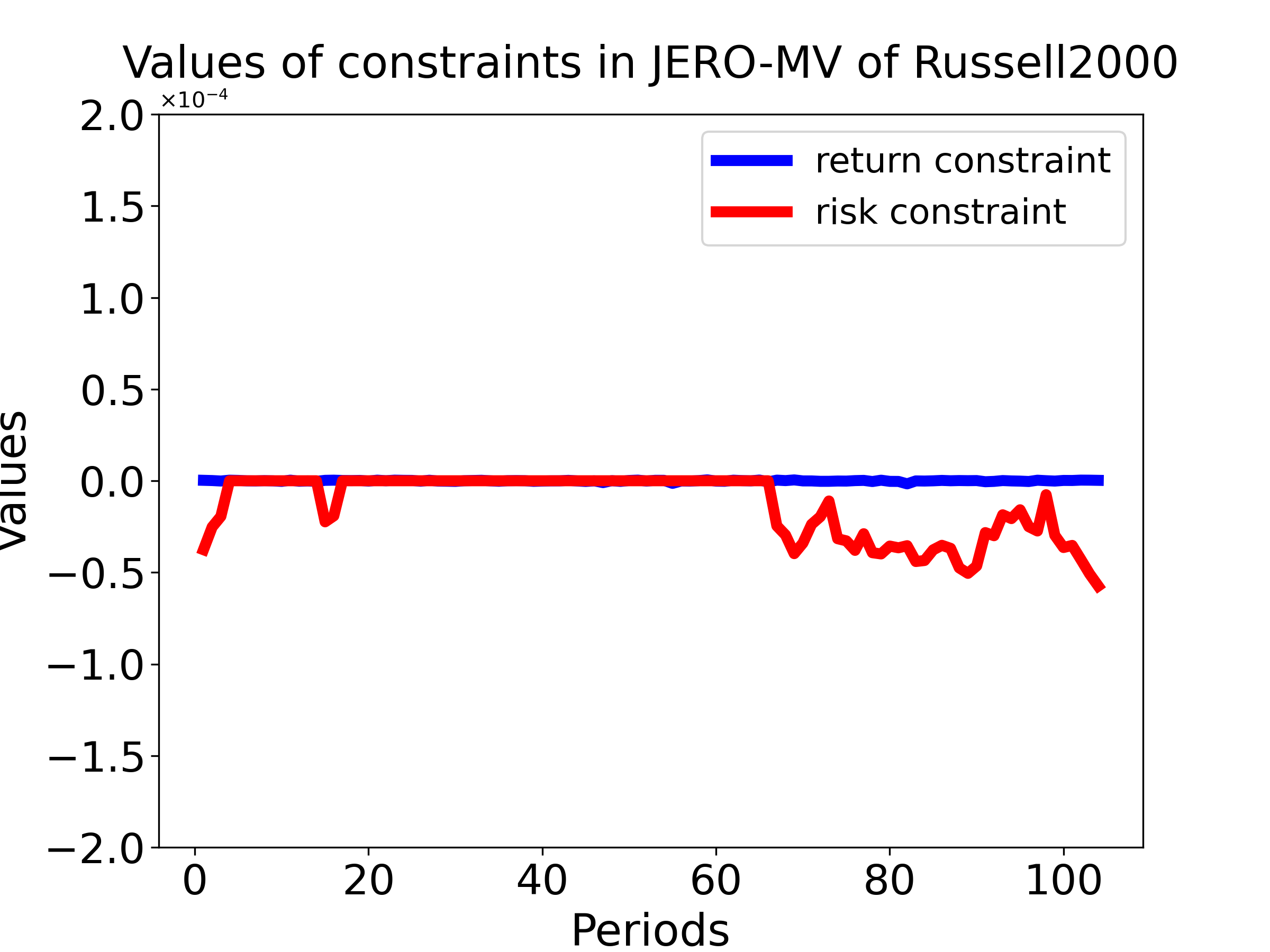}
		\subcaption{}
	 \end{subfigure}
  \caption{Values of the robust return and risk constraints in the JERO-MV optimal portfolios for data sets in Table \ref{Tab-dataset}. (a) - (f) demonstrate the values of the robust return constraint (blue curve) and robust risk constraint (red curve).}
  \label{Fig-constraints}
\end{figure}

Fig. \ref{Fig-constraints} shows the values of the robust return and risk constraints in JERO-MV model for the six data sets, respectively. 
For each data set, we have two sets consisting of $Q=104$ values of the robust return constraint and $Q=104$ values of the robust risk constraint.
The blue curve demonstrates the values of the robust return constraint. We observe that, for the six data sets, all values of the robust return constraint are almost $0$.
The red curve demonstrates the values of the robust risk constraint in JERO-MV model. 
We see that the values are not always $0$, especially those of HS and NDX100.  
Therefore, the robust return constraint in JERO-MV is more likely active, while the robust risk constraint is not always active. When both constraints are active, the return and the risk of the optimal portfolio $\bfw^\ast$ are respectively given by
\[
  \sigma_\ast^2 = \frac{\sigma^2_\target}{e^{2r_\ast}}, \quad \mu_\ast = \mu_\target + \sqrt{e^{2r_\ast}-1}\sigma_\ast.
\]
The smaller $r_\ast$ is, the closer the return and risk of the optimal portfolio will be to their respective targets.

\begin{figure}[!h]
	\centering
	\includegraphics[width= 0.5\textwidth]{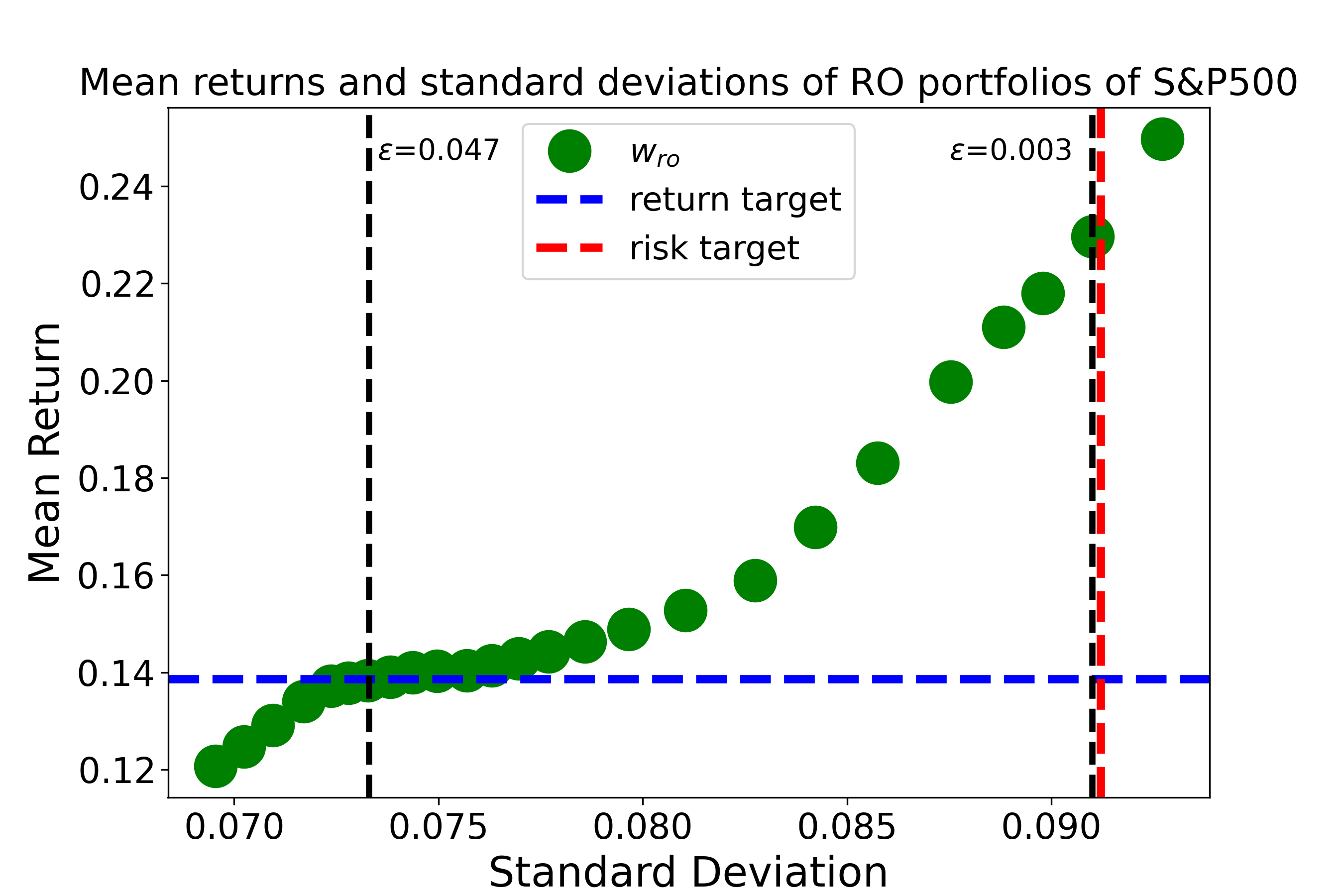}
    \caption{Out-of-sample mean returns and standard deviations of RO portfolios of S$\&$P500. As the size $\epsilon$ of the uncertainty set changes, we compute the corresponding RO portfolio $\bfw_\r$. The two black dotted lines demonstrate $\epsilon = 0.047$ and $\epsilon = 0.003$, respectively. They are the threshold at which out-of-sample performances begin to fail to meet the return target or risk target.}
    \label{Fig-ro}
\end{figure}

In the experiment, we found that for each data set, the size $r_\ast$ of the uncertainty set in JERO-MV is much smaller than $\epsilon$ in RO (note: $\epsilon$ was pre-set in RO).
As stated in \cite{Bertsimas2018data}, the performances of RO portfolio depend on the size of uncertainty set $\epsilon$.
Take S$\&$P500 as an example, Fig. \ref{Fig-ro} shows the out-of-sample mean returns and risks of RO portfolios with different $\epsilon$. 
Specifically, with $\epsilon = 0.001, 0.003, \cdots, 0.05$, we calculate the out-of-sample annualized mean return and risk corresponding to different $\epsilon$ by rolling window method. 
In Fig. \ref{Fig-ro}, the green points represent the RO portfolios. 
The return and risk targets are indicated by blue and red dotted lines, respectively. 
From left to right, $\epsilon$ decreases gradually, that is, the $\epsilon$ corresponding to the leftmost green dot is $0.05$, and the $\epsilon$ corresponding to the rightmost green dot is $0.001$. 
We found that when $\epsilon \in [0.003, 0.047]$, the out-of-sample performances of RO portfolios meet the return and risk targets. 
However, when we set a smaller $\epsilon$ than $0.003$, the out-of-sample risk will exceed the risk target. 
On the contrary, when we set a larger $\epsilon$ than $0.047$, the out-of-sample return will be lower than the return target. 
However, there is no uniform rule for the selection of the uncertainty set size \citep{Kang2019data, Butler2021data}. 
Next, we investigate what the out-of-sample performances of RO would be like, when we set $\epsilon = r_\ast$.

{\bf (D3) JERO-MV can help RO.} We consider the particular choice $\epsilon = r_\ast$. Table \ref{Tab-RO-r} reports the out-of-sample performances of RO portfolios. 
Compared with the performances of RO portfolios in Table \ref{Tab-oos}, we observe that for HS, NDX100, S$\&$P500, Russell1000 and Russell2000 data sets, the out-of-sample mean returns and Sharpe ratios of RO portfolios with $\epsilon = r_\ast$ are higher than those of RO portfolios with $\epsilon = \chi^2_{\alpha,N}$ and $\alpha = 95$. 
Although the out-of-sample standard deviations of RO portfolios with $\epsilon = r_\ast$ are higher, they still meet the given risk targets. 
Therefore, when $\epsilon = r_\ast$, RO portfolios not only meet the return and risk targets but also have better out-of-sample performances for those data sets.

\begin{table}[!h]
  \tabcolsep=0.2cm 
  \centering
  \caption{Out-of-sample performances of RO portfolios when $\epsilon = r^\ast$}\label{Tab-RO-r}
 \begin{tabular}{cccccccc}
  \hline
   ~ & ~ & DAX & HS & NDX100 & S$\&$P500 & Russell1000 & Russell2000\\[1 ex]
   ~ & ~ & N=27 & N=67 & N=90 & N=485 & N=859 & N=1214\\
  \hline
  \multirow{4}*{RO}       & MR & 0.0090  & 0.2032 & 0.1668 & 0.2278 & 0.2335 & 0.0996 \\
                      ~   & SR & -0.2283 & 1.9701 & 1.5829 & 2.1273 & 2.2294 & 0.8638 \\
                      ~   & SD & 0.0416  & 0.0866 & 0.0835 & 0.0908 & 0.0892 & 0.0752 \\
                      ~   & SI & 0.0028  & 0.0020 & 0.0021 & 0.0005 & 0.0003 & 0.0022 \\
  \hline
 \end{tabular}
\end{table}

\section{Conclusion} \label{Section-Conclusion} 

JERO is a new and promising framework for robust optimization. 
Previously, it was applied to portfolio construction with a robust mean-return constraint, leading to JERO-M model. 
In this paper, we study the case when the robust risk constraint is also included, leading to JERO-MV model. 
Compared with JERO-M, JERO-MV cannot be cast as convex-concave framework that allows a tractable representation of the robust mean-return constraint. 
A tractable representation of the robust risk constraint was obtained through S-Lemma.

Although JERO-MV does not optimize return or risk of a portfolio directly, it is surprising to prove that it acts like a utility function that selects an efficient portfolio. 
Moreover, the efficient portfolio has a closed-form formula when JERO-MV has only the budget constraint. 
When more constraints were included in JERO-MV, the feasibility issue arises as the two robust constraints (mean vs risk) may conflict with each other.
We successfully addressed this issue by emploting a bisection method.
The numerical results provide a clear picture of how JERO-MV would select portfolios.

This research also paves the way to including other robust constraints in JERO. 
For instance, it is known that the portfolio diversification return (DR) of \cite{booth1992diversification} is closely related to the portfolio risk. 
When DR is above a certain level, the resulting portfolio often shows outstanding out-of-sample performance
\citep{carmichael2018rao, maeso2020maximizing, qi2023geometric}.
Hence, it is desirable to robustify the DR constraint in JERO and investigate its effect. 
Another worthy research topic is to employ machine learning algorithms would allow the models to include many more constraints and enlarge the application venues. By integrating learning with contextual optimization, we can enhance the overall performance of the portfolio \cite{Sadana2025a}.
We plan to pursuit the investigation along those lines in our future work.

\begin{appendices}

\renewcommand{\theequation}{A.\arabic{equation}} 
\setcounter{equation}{0} 
\section*{Appendix A \quad Proof of Proposition \ref{Thm-variance}}\label{secA1}

We first recall the S-procedure about two quadratic functions. Suppose there are two symmetric matrices $A_1, A_2 \in \mathbb{S}^n$, $\bfb_1, \bfb_2 \in \Re^n$,
and $c_1, c_2 \in \Re$. Let
\[
 F_i(\bfx) := \bfx^\top A_i \bfx + 2 \bfb_i^\top \bfx + c_i, \quad i =1, 2.
\]

\begin{lemma} \cite[Appendix B.2]{boyd2004convex}
 \label{Lemma-S}
 Assume that there exists $\hat{\bfx} \in \Re^n$ such that $F_1(\hat{\bfx}) <0$.
The implication
\[
   F_1(\bfx) \le 0 \qquad \Longrightarrow \qquad F_2(\bfx) \le 0 
\]	
holds if and only if there exists $\lambda \ge 0$ such that
\[
 \left[
  \begin{array}{ll}
  	  A_2 & \bfb_2 \\
  	  \bfb_2^\top & c_2 
  \end{array} 
 \right] \preceq  \lambda \left[ 
 \begin{array}{ll}
 	A_1 & \bfb_1 \\
 	\bfb_1^\top & c_1
 \end{array} 
 \right ] .
\]

\end{lemma}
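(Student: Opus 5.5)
The plan is to prove the two directions separately. Sufficiency is a one-line computation. Necessity goes through homogenization and the convexity of the joint range of two quadratic forms (Dines' theorem), followed by a separating-hyperplane argument. The Slater point $\hat{\bfx}$ guarantees that the multiplier on $F_2$ is nonzero.

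\emph{Sufficiency.} Let $M_i := \left[\begin{array}{ll} A_i & \bfb_i \\ \bfb_i^\top & c_i\end{array}\right]$, so that $F_i(\bfx) = [\bfx;1]^\top M_i [\bfx;1]$. If $M_2 \preceq \lambda M_1$ with $\lambda \ge 0$, then evaluating the quadratic form at $[\bfx;1]$ gives $F_2(\bfx) \le \lambda F_1(\bfx)$ for every $\bfx$. Hence $F_1(\bfx)\le 0$ implies $F_2(\bfx)\le 0$.

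\emph{Necessity, step 1: homogenize.} Define $q_i(\bfx,t) := [\bfx;t]^\top M_i [\bfx;t]$. I will show that the implication extends to the homogeneous forms: $q_1(\bfz)\le 0 \Rightarrow q_2(\bfz)\le 0$ for all $\bfz=(\bfx,t)\in\Re^{n+1}$.

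For $t\neq 0$ this follows by scaling, since $q_i(\bfx,t)=t^2F_i(\bfx/t)$.

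For $t=0$, suppose $\bfx^\top A_1\bfx \le 0$.
\begin{itemize}
\item If $\bfx^\top A_1 \bfx<0$, then $F_1(s\bfx)<0$ for large $s$. Hence $F_2(s\bfx)\le 0$, and dividing by $s^2$ and letting $s\to\infty$ gives $\bfx^\top A_2\bfx\le 0$.
\item If $\bfx^\top A_1\bfx=0$, I perturb the point $(\bfx,0)$ toward $(\hat{\bfx},1)$. Consider $(\bfx+\epsilon\hat{\bfx},\epsilon)$, or a sign-adjusted version $(\pm\bfx+\epsilon\hat{\bfx},\epsilon)$ chosen so that the cross term $\pm 2\epsilon\,\bfx^\top(A_1\hat{\bfx}+\bfb_1)$ is nonpositive. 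Then $q_1<0$ for small $\epsilon>0$, because the $\epsilon^2$ term equals $\epsilon^2 F_1(\hat{\bfx})<0$. The $t\neq 0$ case now gives $q_2\le 0$ at these points, and letting $\epsilon\to 0$ with continuity yields $q_2(\bfx,0)\le 0$.
\end{itemize}
This limiting argument at $t=0$ is the first delicate point.

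\emph{Necessity, step 2: convexity and separation.} By Dines' theorem, the joint range $K:=\{(q_1(\bfz),q_2(\bfz)) : \bfz\in\Re^{n+1}\}\subset\Re^2$ of two quadratic forms is a convex cone. Establishing this hidden convexity is the main obstacle. I would either cite it or prove it by a two-dimensional reduction: for any two points of $K$, restrict both forms to the plane spanned by their preimages and check convexity there by hand.

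By step 1, $K$ is disjoint from the convex set $\{(u,v): u\le 0,\ v>0\}$. A separating hyperplane then gives $(\lambda,\mu)\ne 0$ with $\lambda,\mu\ge 0$ such that $\mu q_2(\bfz)\le \lambda q_1(\bfz)$ for all $\bfz$.

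If $\mu=0$, then $\lambda>0$ and $q_1\ge 0$ everywhere. This contradicts $q_1(\hat{\bfx},1)=F_1(\hat{\bfx})<0$, so $\mu>0$. Dividing by $\mu$ gives $q_2\le (\lambda/\mu) q_1$ as quadratic forms, that is, $M_2\preceq (\lambda/\mu)M_1$, which is the claimed matrix inequality.
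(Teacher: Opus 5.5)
Your proof is correct. The paper does not prove this lemma; it quotes it from Boyd and Vandenberghe, so the natural comparison is with the argument there, which is organized differently.

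In Boyd and Vandenberghe the matrix inequality $M_2\preceq\lambda M_1$, $\lambda\ge 0$, is handled directly in $\mathbb{S}^{n+1}$, in three steps:
\begin{enumerate}
\item A theorem of alternatives for LMIs, with the strictly feasible $\hat{\bfx}$ serving as the constraint qualification, says that if no such $\lambda$ exists then some $X\succeq 0$ has $\mathrm{tr}(M_1X)\le 0<\mathrm{tr}(M_2X)$.
\item A rank-one ``field of values'' lemma (for every $X\succeq 0$ there is $\bfz$ with $\bfz^\top M_i\bfz=\mathrm{tr}(M_iX)$, $i=1,2$) turns this into a vector $\bfz=(\bfx,t)$.
\item The case $t=0$ is removed using $\hat{\bfx}$, by essentially the same perturbation you use in your step~1.
\end{enumerate}

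That rank-one lemma is Dines' convexity of the joint range in disguise. So both proofs rest on the same hidden convexity and the same dehomogenization trick. What differs is where the separation is done, and in which order.

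You dehomogenize first and then separate two convex sets in $\Re^2$. This needs only planar separation (plus the two-dimensional reduction for Dines). It also shows clearly the two places where the Slater point enters: extending the implication to $t=0$, and forcing the multiplier of $q_2$ to be nonzero.

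The duality route requires conic duality in matrix space. In exchange, it sits inside the SDP framework and yields, from the same lemma, strong duality and exactness of the SDP relaxation for the one-constraint quadratic program. That is how Boyd and Vandenberghe use it.

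When you write yours out, make explicit why the separating line passes through the origin with $\lambda,\mu\ge 0$. The reasons are that $K$ is a cone containing the origin, the origin lies in the closure of $\{(u,v):u\le 0,\ v>0\}$, and that set recedes in the directions $(-1,0)$ and $(0,1)$.
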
 
{\bf Proof of Proposition \ref{Thm-variance}.} The key observation in this proof is that the robust risk constraint (10) can be restated as the implications of
two quadratic functions so that the S-procedure above can be applied.
We only consider the case $r>0$. Otherwise, it is trivial since $\E(0; \D)$ would contain a single point and the result holds automatically. 
To simplify our proof, let us make some variable substitution. Let
\[
d := \sqrt{ e^{2r} -1} > 0 \qquad \mbox{and} \qquad \bfy := \frac{1}d (\bfbeta - \hatbeta).
\]
Then $\bfbeta \in \E(r; \D)$ if and only if $\bfy^\top \hatC^{-1} \bfy \le 1$. Therefore, 
the robust risk constraint
(10) holds if and only if the following implication involving two quadratic functions in $\bfy$ holds
\be \label{Implication-A}
 \bal
 F_1(\bfy) := \bfy^\top \hatC^{-1} \bfy -1 \le 0  \quad
\Rightarrow \quad
F_2(\bfy) := d^2 \bfy^\top \bfw \bfw^\top \bfy + \bfw^\top \hatC\bfw - \sigma_\target^2 \le 0.
 \eal
\ee
Note that $\bfy=0$ satisfies $\bfy^\top \hatC^{-1} \bfy -1 = -1 < 0$.
By the S-Procedure, the implication (\ref{Implication-A}) holds if and only if
there must exists $\lambda \ge 0$ such that (we use $\sigma^2$ for $\sigma^2_\target$ for simplicity)
\[
\left[
\begin{array}{cc}
	d^2 \bfw \bfw^\top & 0\\
	0  & \bfw^\top \hatC \bfw - \sigma^2
\end{array} 
\right] \preceq \lambda \left[
\begin{array}{cc}
	\hatC^{-1} \bfw \bfw^\top & 0\\
	0  & -1 
\end{array} 
\right] .
\]
In other words, 
\[
M := \left[
\begin{array}{cc}
	\lambda \hatC^{-1} -	d^2 \bfw \bfw^\top & 0\\
	0  & \sigma^2 -\lambda -  \bfw^\top \hatC \bfw 
\end{array} 
\right] \succeq 0 .
\]
Now we let
\[
H := \hatC^{1/2} \bfw \bfw^\top \hatC^{1/2} = Q \Lambda Q^\top,
\]
where $\Lambda := \diag(\lambda_1, \ldots, \lambda_n)$, with $\lambda_1 \ge \lambda_2 \ge \cdots \ge \lambda_n$ being the
eigenvalues of $H$ in nonincreasing order, and $Q^\top Q = I$ (i.e, $Q$ is the eigenvector matrix).
Define
\[
B := \left[
\begin{array}{cc}
	Q^\top \hatC^{1/2} & 0 \\
	0 & 1
\end{array} 
\right].
\]
Apparently, $B$ is nonsingular. Therefore, $M \succeq 0$ if and only if
\bes
 \bal
0 \preceq BMB^\top 
&= \left[
\begin{array}{cc}
	\lambda I - d^2 Q^\top H Q  & 0\\
	0  & \sigma^2 -\lambda -  \bfw^\top \hatC \bfw 
\end{array} 
\right]  = \left[
\begin{array}{cc}
	\lambda - d^2 \Lambda  & 0\\
	0  & \sigma^2 -\lambda -  \bfw^\top \hatC \bfw 
\end{array} 
\right] .
 \eal
\ees
This is equivalent to
\be \label{Condition-1}
\lambda I - d^2 \Lambda \succeq 0 \qquad \mbox{and} \qquad
\sigma^2 -\lambda -  \bfw^\top \hatC \bfw  \ge 0 .
\ee 
We now examine the first condition in (\ref{Condition-1}). It means
\be \label{Condition-2}
d^2 \lambda_1 \le \lambda \quad (\mbox{ $\lambda_1$ is the largest eigenvalue of $H$}).
\ee 
We note that $H$ is rank-1 positive semidefinite matrix. Hence
\[
\lambda_1 = \mbox{Trace}(H) = \mbox{Trace}(\hatC^{1/2} \bfw \bfw^\top \hatC^{1/2})
= \bfw^\top \hatC \bfw ,
\]
which with (\ref{Condition-1}) and  (\ref{Condition-2}) yields 
\[
\lambda \ge (e^{2r} -1) \bfw^\top \hatC \bfw \quad \mbox{and} \quad 
\lambda \le \sigma^2 - \bfw^\top \hatC \bfw. 
\]
This is equivalent to
(11). \hfill $\Box$

\renewcommand{\theequation}{B.\arabic{equation}} 
\setcounter{equation}{0} 
\section*{Appendix B \quad Proof of Proposition \ref{Prop-Sharpe}}

We use the result in Proposition \ref{Prop-EF}. Let the efficient portfolio $\bfw_\gamma$ be parameterized by $\gamma \ge 0$. 
Then the Sharpe ratio can be represented as
\begin{equation*}
\begin{aligned}
    f(\gamma) :=  \  \frac{\mu_\gamma- \mu_{target}  }{ \sigma_\gamma }
 = \  \frac{\mu_{mvp}- \mu_{target} + \gamma \mu_0 }{ \sqrt{\gamma^2 + \sigma_{mvp}^2 } }
\end{aligned}
\end{equation*}
It is differentiable with respect to $\gamma$. Simple calculation shows that its derivative is given by
\[
 f'(\gamma) = \frac{\mu_0 \sigma_\mvp^2 + \gamma(\mu_\target - \mu_\mvp) }{ (\gamma^2 + \sigma_\mvp^2 )^{3/2} }.
\]
Since $\mu_0 \ge 0$, we have $f'(\gamma) \ge 0$ under the assumption $\mu_\target \ge \mu_\mvp$. 
Therefore, the Sharpe ration is an increasing function and reaches
its maximum at $\gamma$ when $\sigma_\gamma$ is the largest. Therefore, Sharpe portfolio is the efficient portfolio that realizes the risk target
$\sigma_\target$. This proves the result.  \hfill $\Box$

\renewcommand{\theequation}{C.\arabic{equation}} 
\setcounter{equation}{0} 
\section*{Appendix C \quad Proof of Proposition \ref{Prop-r-mv}}

 (i) is easy to prove. We note that the function $r_v(\bfw)$ only depends on the variance of $\bfw$ and
\[
  r_v(\bfw) = \frac 12 \ln(\sigma_\target^2) - \frac 12 \ln(\sigma_\bfw^2).
\]
The result follows from the fact that the function $\ln(\cdot)$ is increasing.

The first part in (ii) follows from the following inequality:
\begin{equation*}
 \begin{aligned}
    r_m(\bfw_\ef^\sigma) = \ \frac 12 \ln \Big(
	1 +  (\hatbeta^\top \bfw_\ef^\sigma - \bfmu_\target)^2 / \sigma^2
	\Big) \ge \  \frac 12 \ln \Big(
	1 +  (\hatbeta^\top \bfw - \bfmu_\target)^2 / \sigma_{\bfw}^2
	\Big)  = \ r_{(\bfw)},
 \end{aligned}
\end{equation*}
where the inequality used the fact $\hatbeta^\top \bfw_\ef^\sigma \ge \hatbeta^\top \bfw$ (due to $\bfw_\ef^\sigma$ and $\bfw$ shares the same variance, and $\bfw^\sigma_\ef$ is efficient).

The second part in (ii) is due to the following observation.  
The function $r_m(\bfw_\ef^\sigma)$ is an increasing function of the Sharpe ratio (13) and the Sharpe ratio is an increasing
function along the efficient frontier upto Sharpe portfolio (see the proof of Proposition 3). 
Therefore, $r_m(\bfw_\ef)$ is an increasing function over the interval
$[\sigma_{\mu target}, \ \sigma_{target}]$ and it reaches its maximum at $\bfw_{\sigma target}$. Moreover, 
\begin{equation*}
 \begin{aligned}
   r_m(\bfw_{\mu \target}) = &  \frac 12 \ln \Big(
    (\hatbeta^\top \bfw_{\mu target} - \bfmu_{target})^2 / \sigma^2 + 1   \Big) = \frac 12 \ln 1 = 0, 
 \end{aligned}
\end{equation*}
where the second equation is due to $\hatbeta^\top \bfw_{\mu\target} = \bfmu_\target$.

(iii) The result in (i) says that $r_v(\bfw_\ef^\sigma)$ is decreasing when $\sigma$ changes from $\sigma_{\mu\target}$ to $\sigma_\target$ with the end value
$r_v(\bfw_{\sigma\target}) = 0$.
The result in (ii) says that $r_m(\bfw_\ef^\sigma)$ is increasing as $\sigma$ varies from  $\sigma_{\mu\target}$ to $\sigma_\target$ with
the starting value $r_m(\bfw_{\mu\target}) =0$.
Therefore,
the two quantities $r_m(\bfw_\ef)$ and $r_v(\bfw_\ef)$ must become equal at some point.
On the other hand, (ii) implies that  $r_m(\bfw)$ achieves its maximum at $\bfw_\ef^\sigma$ over all $\bfw \in \W_\sigma$.
It follows from (14) that the optimal $r_*$ is achieved when  $r_m(\bfw_\ef)$ and $r_v(\bfw_\ef)$ become equal. 
This finishes the proof. \hfill $\Box$

\renewcommand{\theequation}{D.\arabic{equation}} 
\setcounter{equation}{0} 
\section*{Appendix D \quad Proof of Theorem \ref{Thm-EF-A}}

The optimal solution $\bfw$ of JERO-MV model (12) is decided by the equation (15), which is equivalent to
\be \label{Eq-S}
  \sigma_\target^2 = \sigma_\bfw^2 + \Big( \hatbeta^\top \bfw - \mu_\target \Big)^2 .
\ee
Since $\bfw$ is efficient as established above, it satisfies the efficient frontier equations for some $\gamma>0$:
\bes 
 \bal
  & \bfw = \bfw_\mvp + \gamma_\target \bfw_0, \ \hatbeta^\top \bfw = \mu_\mvp + \gamma \mu_0, \ \mbox{and} \ \sigma_\bfw^2 = \sigma_\mvp^2 + \gamma^2 .
 \eal
\ees
Substitute those into (\ref{Eq-S}) to get
\be \label{Eq-rho}
 \bal
  & (1+\mu_0^2) \gamma^2  - 2\mu_0 (\mu_\target - \mu_\mvp ) \gamma +  (\mu_\target - \mu_\mvp )^2 + \sigma_\mvp^2 - \sigma_\target^2 = 0 .
 \eal
\ee
This is a quadratic equation in $\gamma$ and its discriminant term is
\bes
 \bal
\Delta_1 := & \ 4 \mu_0^2 	(\mu_\target - \mu_\mvp )^2 - 4 (1+\mu_0^2) \left[ \sigma_\mvp^2 - \sigma_\target^2  + (\mu_\target - \mu_\mvp )^2 \right] \\
= & \ -4 (\mu_\target - \mu_\mvp )^2 + 4 (1+\mu_0^2) (\sigma_\target^2 - \sigma_\mvp^2 ) \\
= & \ 4 (\sigma_\target^2 - \sigma_\mvp^2 ) + 4 \mu_0^2 
   \frac{(\mu_{\sigma\target} - \mu_\mvp)^2 }{\mu_0^2} -4 (\mu_\target - \mu_\mvp )^2 \\
= & \ 4 (\sigma_\target^2 - \sigma_\mvp^2 ) + 4 \left[
(\mu_{\sigma\target} - \mu_\mvp)^2 - (\mu_\target - \mu_\mvp )^2 \right] \\
= & \ 4(\sigma_\target^2 - \sigma_\mvp^2) + 4(\mu_{\sigma\target} - \mu_\target)(\mu_{\sigma\target} + \mu_\target - 2\mu_\mvp) \\
= & \ 4 \Delta .
 \eal
\ees
Since we assumed $\mu_{\sigma\target} \ge \mu_\target \ge \mu_\mvp$, we have
\bes
 \bal
  \Delta_1 \ge & \ 4(\sigma_\target^2 - \sigma_\mvp^2) = \ 4 \mu_0^2 ( \mu_{\sigma\target} - \mu_\mvp  )^2 \geq \ 4 \mu_0^2 ( \mu_{\target} - \mu_\mvp  )^2,
 \eal
\ees
where the equality used the fact in Proposition \ref{Prop-EF} (ii). The consequence of this bound for $\Delta_1$ means
that the quadratic equation (\ref{Eq-rho}) has only one positive root for $\gamma$, which is given by
(16).
The optimal portfolio for JERO-MV model (12) can be represented as claimed in the result.
\hfill $\Box$ 

\renewcommand{\theequation}{E.\arabic{equation}} 
\setcounter{equation}{0} 
\section*{Appendix E \quad Proof of Theorem \ref{Thm-EF-B}}

We first note that any feasible portfolio must have its variance $\sigma^2 \in [\sigma_{\mu\target}^2, \ \sigma_\target^2]$.
Furthermore, it is easy to verify the following equivalence.
\be \label{Bound-1}
 \bal
	& r_v(\bfw_\mvp) \le r_m(\bfw_\mvp) \quad \\
	\Longleftrightarrow  \quad & \frac{\sigma_\target^2}{\sigma_\mvp^2} \le 1 + \frac{(\mu_\mvp - \mu_\target)^2}{\sigma_\mvp^2} \quad \\
	\Longleftrightarrow  \quad & \sigma_\target^2 \le \sigma_\ell^2 .
 \eal
\ee
We now use the facts established in Proposition 4.
The first fact is that $r_v(\bfw_\ef^\sigma)$ is decreasing when $\sigma \in [\sigma_{\mu\target}, \ \sigma_\target]$ and
$r_v(\bfw_\mvp)$ is the largest. 
The second fact is that $r_m(\bfw_\ef^\sigma)$ is increasing from $\bfw_\mvp$ and reaches its maximum at $\bfw_T$ by the definition of the tangent
portfolio.
Those two facts with 
 the assumption $\sigma_\target \le \sigma_\ell$ imply that
\[
r_m(\bfw_\ef^\sigma) \ge r_v(\bfw_\ef^\sigma)
\]
for $\bfw_\ef^\sigma$ moving from $\bfw_\mvp$ to $\bfw_T$.
Therefore, the optima value $r_*$ is given by
\bes
 \bal
r_* = & \max\{ \min\{ r_m(\bfw_\ef^\sigma), \  r_v(\bfw_\ef^\sigma) \} \ | \ \bfw_\ef^\sigma \ \mbox{is feasible}  \}	\\ 
    \le & \max\{ r_v (\bfw_\ef^\sigma) \ | \ \bfw_\ef^\sigma\ \mbox{is feasible} \} \\
    = & \ r_v(\bfw_\mvp) 
    =  \max\{ \min\{ r_m(\bfw_\mvp), \; r_v(\bfw_\mvp) \} ,	
 \eal
\ees
where the last equality used (\ref{Bound-1}).
Hence, $\bfw_\mvp$ yields the largest $r_*$ and it is selected by JERO-MV model.

Now suppose $\sigma_\target \ge \sigma_u$. This means that $\bfw_T$ is feasible. 
It is easy to see that
\be \label{Bound-2}
 \bal
	& r_v(\bfw_T) \ge r_m(\bfw_T) \quad \\
	\Longleftrightarrow \quad & \frac{\sigma_\target^2}{\sigma_T^2} \ge 1 + \frac{(\mu_T - \mu_\target)^2}{\sigma_T^2} \quad \\
	\Longleftrightarrow \quad & \sigma_\target^2 \ge \sigma_u^2 .
 \eal
\ee
Since $r_m(\bfw_\ef^\sigma)$ is increasing from $\bfw_\mvp$ to $\bfw_T$ and $r_v(\bfw_\ef^\sigma)$ is decreasing from $\bfw_\mvp$ to
$\bfw_{\sigma\target}$, and $r_m (\bfw_T)$ is the largest value that $r_m(\bfw)$ can achieve, we have
$ r_v(\bfw^\sigma_\ef) \ge r_m(\bfw_\ef^\sigma)$ for $\bfw$ moving from $\bfw_\mvp$ to $\bfw_T$.
 We have
\bes
 \bal
	r_* = & \max\{ \min\{ r_m(\bfw_\ef^\sigma), \ r_v(\bfw_\ef^\sigma) \} \ | \ \bfw_\ef^\sigma \ \mbox{is feasible}  \}	\\
    \le & \max\{ r_m (\bfw_\ef^\sigma) \ | \ \bfw_\ef^\sigma\ \mbox{is feasible} \} \\
    = & \ r_m(\bfw_T) 
    =  \max\{ \min\{ r_m(\bfw_T), \; r_v(\bfw_T) \} ,	
 \eal
\ees
where the last equality used (\ref{Bound-2}).
Hence, $\bfw_T$ yields the largest $r_*$ and it is selected by JERO-MV model.

For the remaining case $\sigma_\target \in (\sigma_\ell, \ \sigma_u)$, the optimal point is defined by the optimality equation 
(15). We arrive at the quadratic equation (\ref{Eq-rho}).
Its discriminant term is
\bes
 \bal
	\Delta_1 
	=& \ -4 (\mu_\target - \mu_\mvp )^2 + 4 (1+\mu_0^2) (\sigma_\target^2 - \sigma_\mvp^2 ) \\
	=& \ 4 \left[
	\sigma_\target^2 - \sigma_\mvp^2  - (\mu_\target - \mu_\mvp )^2
	\right] + 4 \mu_0^2 (\sigma_\target^2 - \sigma_\mvp^2 ) \\
	=& \ 4 ( \sigma_\target^2 - \sigma_\ell^2) +  4 \mu_0^2 (\sigma_\target^2 - \sigma_\mvp^2 ) \\
	\ge & \ 4 \mu_0^2 (\sigma_\target^2 - \sigma_\mvp^2 ) \\
    =& \ 4 \mu_0^2 (\mu_\target - \mu_\mvp)^2 .
 \eal
\ees
Hence, the quadratic equation (\ref{Eq-rho}) has only one positive root and it is given by (16).
The three cases in this result are plotted in Fig.~2.
\hfill $\Box$
\end{appendices}

\bibliography{referencesjeromv}

\end{document}